\def\vbar{\mathchoice{\vrule height6.3ptdepth-.5ptwidth.8pt\kern- .8pt}
{\vrule height6.3ptdepth-.5ptwidth.8pt\kern-.8pt} {\vrule
height4.1ptdepth-.35ptwidth.6pt\kern-.6pt} {\vrule
height3.1ptdepth-.25ptwidth.5pt\kern-.5pt}}
\newtheorem{theorem}{Theorem}[section]
\newtheorem{definition}[theorem]{Definition}
\newtheorem{lemma}[theorem]{Lemma}
\newtheorem{corollary}[theorem]{Corollary}
\newtheorem{prop-def}[theorem]{Proposition-Definition}
\newtheorem{remark}[theorem]{Remark}
\newtheorem{proposition}[theorem]{Proposition}
\numberwithin{equation}{section}
\title[Local derivation on $W(2,2)$]{Local derivations on the Lie algebra $W(2,2)$}
\author{Qingyan Wu}
\address{College of Mathematics and System Sciences, Xinjiang University, Urumqi 830046, Xinjiang, China}
\email{1442648849@qq.com}
\author{Shoulan Gao}
\address{Department of Mathematics, Huzhou University, Zhejiang Huzhou, 313000, China}
\email{gaoshoulan@zjhu.edu.cn}
\author{Dong Liu}
\address{Department of Mathematics, Huzhou University, Zhejiang Huzhou, 313000, China}
\email{liudong@zjhu.edu.cn}
\date{}
\begin{document}

\maketitle

\begin{abstract}
The present paper is devoted to studying local derivations on the Lie algebra $W(2,2)$ which has some outer derivations. Using some linear algebra methods in \cite{CZZ} and a key construction for $W(2,2)$ we prove that every local derivation
on $W(2, 2)$ is a derivation. As an application, we determine all local derivations on the deformed $\mathfrak{bms}_3$ algebra.

\end{abstract}

{\small \textbf{Key words}:  Virasoro algebra; local derivation; $W(2,2)$, deformed $\mathfrak{bms}_3$ algebra

\textbf{Mathematics Subject Classification}: 15A06, 17A36, 17B40}

\section{Introduction}

The notion of local derivation was originally
introduced by Larson and Sourour aroused from studying the reflexivity of the space of linear maps from an algebra to itself (see \cite{Lar, LarSou}).
Let $L$ be an algebra, $M$ be an $L$-bimodule. A linear mapping $\Delta: L\rightarrow M$ is said to be a local derivation if for every $x$ in $ L$ there exists a derivation $D_{x}: L\rightarrow M$, depending on $x$, satisfying $\Delta(x)=D_{x}(x)$. When $M$ is taken to be $ L$, such a local derivation is called a local derivation on $L$.  Local derivations on various algebras are some kind of local properties for the algebras, which turn out to be very interesting (see \cite{Cr, Bre, BS, P, AK, Kad, LZ}, etc.). Recently, several papers have devoted to studying local derivations for Lie (super)algebras. For examples, it is proved that every local derivation on a finite dimensional semi-simple Lie algebra over an algebraically closed field of characteristic 0 is automatically a derivation in \cite{AK}, and every local derivation
on the Witt algebra is a derivation in \cite{CZZ}.  However, there is no a uniform method to determine all local derivations on Lie algebras, so it is an open question to determine all local derivations for some Lie (super)algebras related to the Virasoro algebra.

The infinite dimensional Lie algebra $W(2,2)$ was first introduced by \cite{ZhangDong} to classify some simple vertex operator algebras and played an important role in many areas of mathematics and physics. Its structure and representation theories were studied in many papers (see \cite{ChenLi, DGL, GaoJiangPei, GJP2, JiangZhang,L,tangw22,Rad}, etc.).
Note that  $W(2,2)$ can be realized as the truncated loop algebra of the Virasoro algebra Vir. More specifically,  $W(2,2)$ is isomorphic to ${\rm Vir}\otimes\mathbb C[t, t^{-1}]/(t^2)$ (\cite{GLZ}). In \cite{GLZ}, the general
truncated loop algebra ${\rm Vir}\otimes\mathbb C[t, t^{-1}]/(t^n), n\ge 1$ was introduced and its quasi-finite weight modules were classified.

We know that $W(2,2)$ has some outer derivations, so it takes many difficulties to determine its local derivations.
Using some linear algebra methods in \cite{CZZ} and a key construction (see Lemma \ref{key-construction1} below), we prove that every local derivation
on $W(2, 2)$ is a derivation in this paper. With this research we can also determine all local derivations on some related Lie (super)algebras. As an application, we prove that every local derivation on the deformed $\mathfrak{bms}_3$ algebra, which corresponds to an infinite dimensional lift of the Maxwell algebra (\cite{CCRS}), is also a derivation. Certainly, such researches can be extended to the general
truncated loop algebra.

The present paper is arranged as follows. In Section 2, we recall some known results and establish some related properties concerning the Lie algebra $W(2,2)$. In Section 3, we determine all local derivations on the Virasoro subalgebra of $W(2,2)$. In Section 4, with a new construction we determine the actions of local derivations on $I_m$, and then prove that every local derivation
on $W(2,2)$ is a derivation. As an application, we prove that every local derivation on the deformed $\mathfrak{bms}_3$ algebra is also a derivation in Section 5.

Throughout this paper, we denote by $\mathbb{Z}$, $\mathbb{C}, \mathbb Z^*, \mathbb C^*$ the sets of  all integers, complex numbers, nonzero integers, nonzero complex numbers, respectively. All algebras are defined over $\mathbb{C}$.

\medskip

\section{Preliminaries}

\medskip

In this section we recall definitions, symbols and establish some auxiliary results for later use in this paper.

A derivation on a Lie algebra $L$ is a linear map
$D: L\rightarrow  L$ which satisfies the Leibniz
law, that is,
$$
D([v,w])=[D(v),w]+[v, D(w)]
$$
for all $v,w\in  L.$  The set of all derivations of $ L$, denoted by $\mathrm{Der}( L)$,  is a Lie algebra with respect to the commutation operation. For $u\in L$, the map
\begin{eqnarray*}
\mathrm{ad}\,u: L\rightarrow L, \ \mathrm{ad}\,u(v)=[u,v], \ \forall v\in L
\end{eqnarray*}
is a derivation and a derivation of this form is called \emph{inner derivation}. The set of all inner derivations of $ L$, denoted by $\mathrm{Inn}( L)$, is an ideal of $\mathrm{Der}( L)$.

Recall that a map $\Delta:  L\rightarrow  L$ is called a \textit{local derivation} if
for every $v\in  L,$ there exists a derivation
 $D_{v}: L\rightarrow  L$ (depending on $v$)
such that
\begin{equation}\label{def1}
\Delta(v)=D_{v}(v). \nonumber\end{equation}

By definition, $W(2,2)$ is an infinite-dimensional Lie algebra with $\mathbb{C}$-basis
\begin{eqnarray*}
\{L_{m},I_{m}, C, C_1|m\in\mathbb{Z}\}
\end{eqnarray*}
and relations
\begin{eqnarray*}
&&[L_{m},L_{n}]=(m-n)L_{m+n}+\delta_{m+n, 0}\frac1{12}(m^3-m)C,\\
&&[L_{m},I_{n}]=(m-n)I_{m+n}+\delta_{m+n, 0}\frac1{12}(m^3-m)C_1,\\
&&[I_{m},I_{n}]=0, [x, C]=[x, C_1]=0, \forall m, n\in\mathbb{Z}, x\in W(2, 2).
\end{eqnarray*}

Clearly the Virasoro algebra ${\rm Vir}:={\rm span}\{L_m, C\mid m\in\mathbb Z\}$ is the subalgebra of $W(2,2)$.
It is well known that Vir is the universal central extension of the Witt algebra $W$, which is the derivation of the Laurent polynomial algebra $\mathbb C[t, t^{-1}]$.
Moreover, $W(2,2)$ can be realized as the truncated loop algebra of the Virasoro algebra.  In fact,  $W(2,2)$ is isomorphic to ${\rm Vir}\otimes\mathbb C[t, t^{-1}]/(t^2)$ (see \cite{GLZ}).

All local derivations on the Witt algebra were determined in \cite{CZZ}.

\begin{theorem}\label{ttm-CZZ}\cite{CZZ}
Every local derivation
on  the Witt algebra $W$ is a derivation.
\end{theorem}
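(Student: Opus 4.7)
The plan is to exploit two structural facts about the Witt algebra: every derivation is inner, and the $\mathbb{Z}$-grading $W=\bigoplus_{n\in\mathbb{Z}}\mathbb{C}L_n$ is preserved by $\mathrm{ad}(L_0)$ with eigenvalue $n$ on $L_n$. Since every derivation of $W$ is inner, the condition $\Delta(x)=D_x(x)$ means that for each $x\in W$ there is some $y_x\in W$ with $\Delta(x)=[y_x,x]$.

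First I would normalize at $L_0$. Writing $y_{L_0}=\sum_k c_kL_k$ and using $[L_k,L_0]=kL_k$, the vector $\Delta(L_0)$ has no $L_0$-component. Replacing $\Delta$ by $\Delta-\mathrm{ad}\bigl(\sum_{k\neq 0}c_kL_k\bigr)$, which is still a local derivation because an inner derivation is a derivation, reduces to the case $\Delta(L_0)=0$.

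The main step is to show $\Delta(L_n)$ is a scalar multiple of $L_n$ for every $n\neq 0$. Expanding $\Delta(L_n)=[y_n,L_n]=\sum_{k}a_k^{(n)}(k-n)L_{k+n}$ already forces the $L_{2n}$-coefficient of $\Delta(L_n)$ to vanish. To rule out the remaining off-diagonal components, I would apply the local derivation property to the one-parameter family $L_0+\lambda L_n$, $\lambda\in\mathbb{C}^*$: writing $\Delta(L_0+\lambda L_n)=\lambda\Delta(L_n)=[w_\lambda,L_0+\lambda L_n]$ with $w_\lambda=\sum_m b_m^{(\lambda)}L_m\in W$, comparison of the coefficient of $L_m$ on both sides yields
\[
\lambda f_n(m)=mb_m^{(\lambda)}+\lambda(m-2n)b_{m-n}^{(\lambda)},
\]
where $f_n(m)$ denotes the coefficient of $L_m$ in $\Delta(L_n)$. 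Exploiting the freedom in $\lambda$ together with the crucial requirement that $w_\lambda$ have \emph{finite} support in $W$, a careful linear-algebra analysis, in the spirit of the techniques developed in \cite{CZZ}, forces $f_n(m)=0$ for all $m\neq n$.

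Once $\Delta(L_n)=\mu_nL_n$ for scalars $\mu_n\in\mathbb{C}$, I would apply the local derivation condition to $L_m+L_n$ with $m\neq n$ and compare coefficients; the resulting relations, together with $\mu_0=0$, force $\mu_n$ to depend linearly on $n$, say $\mu_n=cn$. Hence $\Delta=\mathrm{ad}(-cL_0)$, a derivation. The main obstacle is the diagonalization step: killing the off-diagonal components of $\Delta(L_n)$ needs the parametric family $L_0+\lambda L_n$ coupled with the finite-support constraint on the $w_\lambda$, and this is precisely where a genuinely new idea beyond standard derivation arguments is required.
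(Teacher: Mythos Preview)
The paper does not give its own proof of this theorem: it is quoted from \cite{CZZ} and used as a black box. What the paper \emph{does} contain is a reproduction of the CZZ argument in Section~3 (Lemmas~\ref{lemma3-1}--\ref{lemma-Lm0}), carried out for the Witt subalgebra sitting inside $\mathcal W$. Your outline matches that argument step for step: normalize to $\Delta(L_0)=0$ by subtracting an inner derivation; test against the one-parameter family $L_m+xL_0$ (equivalently your $L_0+\lambda L_n$) and use the finite-support constraint on the witnessing element to force $\Delta(L_m)\in\mathbb C L_m$; then compare against a sum $L_m+L_1$ to pin down the scalars. The paper's version of the last step first normalizes $\Delta(L_1)=0$ and then shows $\Delta(L_m)=0$, which is the same as your ``$\mu_n=cn$'' conclusion after subtracting $\mathrm{ad}(-cL_0)$.

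One remark on presentation: at the diagonalization step you write that ``a careful linear-algebra analysis, in the spirit of the techniques developed in \cite{CZZ}'' finishes the job. Since the theorem you are proving \emph{is} the main result of \cite{CZZ}, this is circular as stated; you would need to actually carry out the elimination argument (the analogue of equations \eqref{wqy1}--\eqref{wqy9} in the paper) rather than cite it. You correctly flag that this is where the real work lies, and your recursion $\lambda f_n(m)=mb_m^{(\lambda)}+\lambda(m-2n)b_{m-n}^{(\lambda)}$ is set up correctly, so the missing part is execution rather than strategy.
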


\begin{corollary}\label{coro-Vir}
Every local derivation
on  the Virasoro algebra ${\rm Vir}$ is a derivation.
\end{corollary}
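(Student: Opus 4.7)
The plan is to reduce the statement to Theorem \ref{ttm-CZZ} (the Witt case) via the central extension $\mathrm{Vir} = W \oplus \mathbb{C}C$. The key structural input I will use is the classical fact that every derivation of $\mathrm{Vir}$ is inner, so in particular kills the central element $C$; this will let me transfer information between $\mathrm{Vir}$ and its Witt quotient.

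First, given a local derivation $\Delta$ on $\mathrm{Vir}$, I would show $\Delta(C)=0$: writing $\Delta(C)=D_C(C)$ with $D_C$ inner, the claim is immediate since $C$ is central. Hence $\Delta$ descends to a well-defined linear map $\bar\Delta:W\to W$ on the quotient $\mathrm{Vir}/\mathbb{C}C\cong W$ by $\bar\Delta(\bar x)=\overline{\Delta(x)}$.

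Next, I would check that $\bar\Delta$ is itself a local derivation on $W$: for $\bar x\in W$, take any lift $x\in\mathrm{Vir}$ and use the inner derivation $D_x=\mathrm{ad}(y_x)$ provided by the local derivation property of $\Delta$; it descends to a derivation of $W$ matching $\bar\Delta$ at $\bar x$. By Theorem \ref{ttm-CZZ}, $\bar\Delta$ is a derivation of $W$, which is necessarily inner, say $\bar\Delta=\mathrm{ad}(\bar z)$ for some $\bar z\in W$. Lifting $\bar z$ to $z\in\mathrm{Vir}$ and subtracting $\mathrm{ad}(z)$ from $\Delta$ gives a local derivation $\Phi:=\Delta-\mathrm{ad}(z)$ whose image lies entirely in $\mathbb{C}C$.

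The final step is to show $\Phi\equiv 0$. For each basis element $L_m$, I would write $\Phi(L_m)=[y,L_m]$ for some $y=\sum_k b_k L_k+\gamma C\in\mathrm{Vir}$, expand using the Virasoro brackets, and separate the result into its $L_j$-component and its $C$-component. The requirement $\Phi(L_m)\in\mathbb{C}C$ forces $b_k(k-m)=0$ for every $k$, which in both cases $m\neq 0$ and $m=0$ pins down $y$ enough (to an element of $\mathbb{C}L_m+\mathbb{C}C$ or $\mathbb{C}L_0+\mathbb{C}C$ respectively) to conclude $[y,L_m]=0$. Together with $\Phi(C)=0$, this gives $\Phi=0$ and hence $\Delta=\mathrm{ad}(z)$. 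The main obstacle is the descent step: one must be careful that the inner derivations witnessing $\Delta$ pointwise really do induce derivations of $W$ compatible with $\bar\Delta$; the final coefficient argument is short and only uses the Virasoro brackets directly.
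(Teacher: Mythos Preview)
Your proposal is correct and follows essentially the same approach as the paper's proof: both use the fact that all derivations of $\mathrm{Vir}$ are inner to get $\Delta(C)=0$, then pass to the Witt quotient and invoke Theorem~\ref{ttm-CZZ} to subtract off an inner derivation so that the residual local derivation takes values only in $\mathbb{C}C$, and finally argue via the bracket relations that this residual must vanish on each $L_m$. Your write-up is simply more explicit about the descent step and the final coefficient computation than the paper's terse version.
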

\begin{proof}
Let $\Delta$ be a local derivation of Vir. Since every derivation of the Virasoro algebra is inner (\cite{ZM}),  we have $\Delta(C)=0$. Now by the definition of local derivation and Theorem \ref{ttm-CZZ} we can suppose that $\Delta(L_0)=0$ (also see the proof of Theorem \ref{lemma-vir} below) and $\Delta(L_m)=a_mC$ for some $a_m\in\mathbb C$. By $a_mC=\Delta(L_m)=[u, L_m]$,  we can get $a_m=0$.
\end{proof}

\begin{lemma}\cite{GaoJiangPei}\label{der-w22}
The derivation algebra of $W(2,2)$ is
\begin{eqnarray*}
\mathrm{Der}(W(2,2))=\mathrm{Inn}(W(2,2))\bigoplus\mathbb{C}\delta,
\end{eqnarray*}
where $\delta$ is an outer derivation defined by $\delta(L_{m})=\delta(C)=0,\delta(I_{m})=I_{m}$ and $\delta(C_1)=C_1$ for any $m\in\mathbb{Z}$.
\end{lemma}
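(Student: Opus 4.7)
The plan has two parts. First, I would verify that $\delta$ as defined is a derivation and is outer; then, I would show any derivation $D$ decomposes as $\mathrm{ad}(x) + c\delta$. For the first part, the Leibniz rule for $\delta$ reduces to checking $\delta([L_m, I_n]) = [L_m, \delta(I_n)]$ (since $\delta$ annihilates all $L_n$'s and $C$), and both sides equal $(m-n)I_{m+n} + \delta_{m+n,0}\frac{m^3-m}{12}C_1$. That $\delta$ is outer follows because $\mathrm{ad}(u) = \delta$ with $u = \sum a_n L_n + \sum b_n I_n + \alpha C + \beta C_1$ would force $[u, L_m] = 0$ for all $m$; matching coefficients of $L_k$ and $I_k$ yields $a_n = b_n = 0$ for every $n$, so $u$ is central and $\mathrm{ad}(u) = 0 \neq \delta$.

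For the second part, I would exploit the $\mathrm{ad}(L_0)$-weight grading (weight $-m$ on $L_m$ and $I_m$; weight $0$ on $C, C_1$). Applying $D$ to $[L_0, L_0] = 0$ gives $[D(L_0), L_0] = 0$, so $D(L_0)$ lies in the zero-weight space $\mathbb{C}L_0 \oplus \mathbb{C}I_0 \oplus \mathbb{C}C \oplus \mathbb{C}C_1$. Expanding $D([L_0, L_m]) = -mD(L_m)$ and matching weights then forces $D(L_m) = \lambda_m L_m + \mu_m I_m$ for $m \neq 0$ and kills the $L_0, I_0$ components of $D(L_0)$; the analogous argument for $I_m$ yields $D(I_m) = p_m L_m + q_m I_m$. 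Applying $D$ to $[L_m, C] = 0$ and $[L_m, C_1] = 0$ confines $D(C), D(C_1)$ to $\mathbb{C}C \oplus \mathbb{C}C_1$, and evaluating $D([L_m, L_{-m}])$ for several $m$ (Vandermonde-style elimination) pins down $D(L_0) = 0$, $D(C) = 0$, and the relations $\lambda_{-m} = -\lambda_m$, $\mu_{-m} = -\mu_m$.

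The key recurrence comes from $D([L_m, I_n]) = (m-n)D(I_{m+n})$ for $m + n \neq 0$. Matching $L_{m+n}$ and $I_{m+n}$ coefficients gives $p_n = p_{m+n}$ and $\lambda_m + q_n = q_{m+n}$; combined with $p_m + p_{-m} = 0$ coming from $D([I_m, I_{-m}]) = 0$, this forces $p_m = 0$. The symmetry $\lambda_m + q_n = \lambda_n + q_m$ shows $\lambda_m - q_m$ is a constant $-c$, whence $\lambda$ is additive in its index, i.e.\ $\lambda_m = m\lambda$ and $q_m = m\lambda - c$. Substituting this into the $m+n=0$ case of the same bracket pins down $D(I_0) = -cI_0$ and, crucially, $D(C_1) = -cC_1$ (with vanishing $C$-coefficient).

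Finally, set $\widetilde D := D + c\delta + \mathrm{ad}(\lambda L_0)$. A direct check gives $\widetilde D(L_0) = \widetilde D(I_m) = \widetilde D(C) = \widetilde D(C_1) = 0$ and $\widetilde D(L_m) = \mu_m I_m$. Applying $\widetilde D$ to $[L_m, L_n]$ yields $\mu_{m+n} = \mu_m + \mu_n$, so $\mu_m = m\mu_1$, which identifies $\widetilde D$ with $\mathrm{ad}(-\mu_1 I_0)$. Thus $D = \mathrm{ad}(-\mu_1 I_0 - \lambda L_0) - c\delta$, as required. The main obstacle is the bookkeeping around the central element $C_1$: one must correctly derive that $D(C_1) = -c C_1$ (with zero $C$-component) rather than a more general element of $\mathbb{C}C \oplus \mathbb{C}C_1$, since it is precisely this identity that allows $c\delta$ to absorb the extra action on $I_m$ and produces the direct sum decomposition in the statement.
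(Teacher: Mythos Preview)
The paper does not prove this lemma at all; it simply quotes it from \cite{GaoJiangPei}. So there is no ``paper's own proof'' to compare against, and your proposal amounts to supplying an argument the authors chose to omit.

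Your overall strategy is the standard one and is essentially correct, but there is one genuine slip. You write that applying $D$ to $[L_0,L_0]=0$ gives $[D(L_0),L_0]=0$. This is vacuous: the Leibniz rule yields $[D(L_0),L_0]+[L_0,D(L_0)]=0$, which is $[D(L_0),L_0]-[D(L_0),L_0]=0$, true for any $D(L_0)$ whatsoever. So nothing forces $D(L_0)$ into the zero-weight space at this stage. The standard fix is to first replace $D$ by $D-\mathrm{ad}(u)$ with $u=\sum_{k\neq 0}\tfrac{1}{k}(a_kL_k+b_kI_k)$, where $a_k,b_k$ are the degree-$k$ coefficients of $D(L_0)$; this inner correction forces the new $D(L_0)$ into $\mathbb{C}L_0\oplus\mathbb{C}I_0\oplus\mathbb{C}C\oplus\mathbb{C}C_1$, and from there your weight-matching argument via $D([L_0,L_m])=-mD(L_m)$ does indeed kill the $L_0$ and $I_0$ components.

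Once that gap is patched, the rest of your outline goes through: the additivity $\lambda_{m+n}=\lambda_m+\lambda_n$ (hence $\lambda_m=m\lambda_1$) actually comes directly from $D([L_m,L_n])$, and the computation of $D([L_m,I_{-m}])$ for $m=1,2$ correctly pins down both $D(I_0)$ and $D(C_1)$ as the appropriate multiples of $I_0$ and $C_1$. There is a harmless sign discrepancy in your formula $q_m=m\lambda-c$ (your own convention $\lambda_m-q_m=-c$ gives $q_m=m\lambda+c$), but this does not affect the conclusion.
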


\section{Local derivations on the Witt subalgebra}

Set ${\mathcal W}:=W(2,2)/(\mathbb CC\oplus\mathbb CC_1)$, the quotient of $W(2,2)$ by its center. Now we first determine all local derivations on ${\mathcal W}$, and then extend to the Lie algebra $W(2,2)$.
In this section, we shall concern local derivations on the Witt subalgerbra $W$ of ${\mathcal W}$ with the same methods in \cite{CZZ}.

For a local derivation $\Delta:{\mathcal W} \rightarrow {\mathcal W}$ and $x\in {\mathcal W}$,  we always use the symbol $D_{x}$ for the derivation of ${\mathcal W}$ satisfying $\Delta(x)=D_{x}(x)$ and $D_x$ given by Lemma \ref{der-w22} in the following sections.

For a given $m\in\mathbb{Z}^\ast$, recall that $\mathbb{Z}_m=\mathbb{Z}/m\mathbb{Z}$ is the modulo $m$ residual ring of $\mathbb{Z}$.
Then for any $i\in \mathbb{Z}$ we have $\bar i\in \mathbb{Z}_m$, where $\bar i =\{i+km \mid k\in\mathbb{Z}\}$.

 Let $\Delta$ be a local derivation on ${\mathcal W}$ with $\Delta( L_{0})=0 $. For $L_{m}$ with $m\neq0$, set
 \begin{eqnarray}\label{txm2}
\Delta(L_{m})&=&\sum\limits_{n\in\mathbb{Z}}(a_{n}L_{n}+b_{n}I_{n}),
\end{eqnarray}where $a_n, b_n\in\mathbb C$ for any $n\in\mathbb Z$.

Note that $$\mathbb{Z}=\bar0\cup \bar1\cup \cdots \cup \overline{m-1}.$$
Therefore, (\ref{txm2}) can be written as follows:
\begin{eqnarray}\label{equ-Lm1}
\Delta(L_{m})
&=&\sum\limits_{\bar i\in F}\sum\limits_{k=s_i}^{t_i}a_{i+km}L_{i+km}+\sum\limits_{\bar i\in E}\sum\limits_{k=p_{i}}^{q_{i}}b_{i+km}I_{i+km},
\end{eqnarray}
where $s_i\leq t_i\in\mathbb{Z},p_{i}\leq q_{i}\in\mathbb{Z}$, and $E, F\subset\mathbb{Z}_m$.

For $L_{m}+x L_{0}$, where $x\in\mathbb{C}^{*}$, since $\Delta$ is a local derivation, there exists
$\sum\limits_{n\in \mathbb{Z}}(a'_{n}L_{n}+b'_{n}I_{n})\in\mathcal W$,
where $a'_{n},b'_{n}\in\mathbb{C}$ for any $n\in\mathbb Z$, such that
\begin{eqnarray}\label{equ-Lm2}
\nonumber\Delta( L_{m})&=&\nonumber\Delta(L_{m}+xL_{0})\\ \nonumber
&=&\nonumber[\sum\limits_{n\in \mathbb{Z}}a'_{n}L_{n}+b'_{n}I_{n}, L_{m}+x L_{0}]\\ \nonumber
&=&\sum\limits_{\bar i\in F}\sum\limits_{k=s'_i}^{t'_i+1}((i+(k-2)m)a'_{i+(k-1)m}+x(i+km)a'_{i+km})L_{i+km}\\
&&+\sum\limits_{\bar i\in E}\sum\limits_{k=p'_{i}}^{q'_{i}+1}((i+(k-2)m)b'_{i+(k-1)m}+x(i+km)b'_{i+km})I_{i+km}.
\end{eqnarray}

Note that the subset $E, F$ in \eqref{equ-Lm2} is same as that of \eqref{equ-Lm1}.

\begin{lemma}\label{lemma3-1}
Let $\Delta$ be a local derivation on ${\mathcal W}$ such that $\Delta(L_{0})=0$. Then $F=\{\bar0\}$ and $E=\{\bar0\}$ in $(\ref{equ-Lm1})$ and $(\ref{equ-Lm2})$.
\end{lemma}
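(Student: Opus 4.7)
The plan is to argue by contradiction: suppose some $\bar i \in F$ satisfies $\bar i \ne \bar 0$ (the case of $E$ is completely analogous, because the $L$- and $I$-coefficients of $[u_x, L_m + xL_0]$ decouple). Comparing the coefficient of $L_{i+km}$ in \eqref{equ-Lm1} and \eqref{equ-Lm2} yields, for every $x \in \mathbb{C}^*$ and every $k \in \mathbb{Z}$, the recurrence
\begin{equation*}
(i + (k-2)m)\, a'_{i+(k-1)m}(x) + x(i+km)\, a'_{i+km}(x) = a_{i+km},
\end{equation*}
with the convention $a_{i+km} = 0$ outside $[s_i, t_i]$. Here $u_x = \sum_n (a'_n(x) L_n + b'_n(x) I_n) \in \mathcal W$ is a finitely supported element realizing $\Delta(L_m) = [u_x, L_m + xL_0]$; the possible outer component $\lambda_x \delta$ of the associated derivation contributes nothing because $\delta(L_m + xL_0) = 0$ by Lemma \ref{der-w22}. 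A crucial observation is that $\bar i \ne \bar 0$ forces $i + \ell m \ne 0$ for all $\ell \in \mathbb{Z}$, so every denominator appearing below is nonzero.

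Next, I exploit the top of the support of $u_x$. Let $K_+ := \max\{k : a'_{i + km}(x) \ne 0\}$; the recurrence at $k = K_+ + 1$ becomes $(i + (K_+ - 1)m)\, a'_{i + K_+ m}(x) = a_{i + (K_+ + 1)m}$, whose left side is nonzero, forcing $a_{i + (K_+ + 1)m} \ne 0$ and hence $K_+ + 1 \le t_i$. In particular $a'_{i + t_i m}(x) = 0$ for every $x$. Running the recurrence downward and writing $P_k(x) := a'_{i + (t_i - k)m}(x)$, one gets $P_1(x) = a_{i + t_i m}/(i + (t_i - 2)m)$, a nonzero constant (since $a_{i + t_i m} \ne 0$ by definition of $t_i$). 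A straightforward induction then shows each $P_k(x)$ is a polynomial in $x$ of degree exactly $k - 1$ with nonzero leading coefficient.

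The main obstacle, and the decisive step, is turning the finite support of $u_x$ \emph{from below} into a genuine contradiction. The point is that once $k > K_0 := t_i - s_i + 1$, the inhomogeneous term $a_{i + (t_i - k + 1)m}$ vanishes and the recurrence degenerates to
\begin{equation*}
P_k(x) = -\frac{x\,(i + (t_i - k + 1)m)}{i + (t_i - k - 1)m}\, P_{k-1}(x),
\end{equation*}
giving $P_k(x) = D_k\, x^{k - K_0}\, P_{K_0}(x)$ with $D_k \ne 0$ for all $k \ge K_0$. Since $P_{K_0}$ is a nonzero polynomial with only finitely many roots, one may choose a single $x_0 \in \mathbb{C}^*$ with $P_{K_0}(x_0) \ne 0$; then $a'_{i + (t_i - k)m}(x_0) \ne 0$ for all $k \ge K_0$, contradicting the finite support of $u_{x_0}$. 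This forces $F \subseteq \{\bar 0\}$, and the identical argument applied to the $I_n$-coefficients $b'_n(x)$ gives $E \subseteq \{\bar 0\}$.
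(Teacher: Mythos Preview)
Your argument is correct and follows essentially the same strategy as the paper: compare the coefficients of $L_{i+km}$ in \eqref{equ-Lm1} and \eqref{equ-Lm2}, use that $i+km\neq 0$ for all $k$ when $\bar i\neq\bar 0$, and exploit the free parameter $x$ to force a contradiction. The only difference is the direction of the elimination: the paper first pins down both endpoints $s'_i=s_i$ and $t'_i=t_i-1$, then substitutes bottom-up to obtain a single relation $a_{i+t_im}+\ast\,x^{-1}+\cdots+\ast\,x^{-(t_i-s_i)}=0$ that cannot hold for every $x$; you instead fix only the top boundary $a'_{i+t_im}(x)=0$ and run the recurrence downward, showing the resulting $P_k$ remain nonzero for a suitably chosen $x_0$ and thereby contradicting the finite support of $u_{x_0}$. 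This is a cosmetic variation of the same elimination, with the mild advantage that you do not need the preliminary step establishing $s'_i=s_i$.
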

\begin{proof}
It is essentially same as  that of Lemma 3.2 in \cite{CZZ}.

Assumed that $m\neq0$, $a _{i+s_im}\neq0,a_{i+t_im}\neq0$ and $a_{i+s'_im}^{\prime}\neq0,a_{i+t'_im}^{\prime}\neq0$ for some $\bar i\neq\bar0$. Comparing the right hand sides of (\ref{equ-Lm1}) and (\ref{equ-Lm2}) we see that $s_i=s'_i$ and $t_i\leq t'_i+1$. If $t_i<t'_i+1$, from (\ref{equ-Lm1}) and (\ref{equ-Lm2}), we deduce that
\begin{eqnarray*}
(i+(t'_i-1)m)a_{i+t'_im}^{\prime}=0.
\end{eqnarray*}
Then $i+(t'_i-1)m=0$, i.e., $\bar i=\bar0$, a contradiction. Thus $t_i=t'_i+1$, and $s_i<t_i$.

Comparing (\ref{equ-Lm1}) and (\ref{equ-Lm2}), we deduce that
\begin{eqnarray}\label{txm6}
a_{i+s_im}&=&x(i+s_im)a_{i+s_im}^{\prime}\nonumber ;\\\nonumber
a_{i+(s_i+1)m}&=&(i+(s_i-1)m)a_{i+s_im}^{\prime}+x(i+(s_i+1)m)a_{i+(s_i+1)m}^{\prime};\\\nonumber
&\vdots&\\\nonumber
a_{i+(t_i-1)m}&=&(i+(t_i-3)m)a_{i+(t_i-2)m}^{\prime}+x(i+(t_i-1)m)a_{i+(t_i-1)m}^{\prime};\\\nonumber
a_{i+t_im}&=&(i+(t_i-2)m)a_{i+(t_i-1)m}^{\prime}.
\end{eqnarray}
Since $i+km\neq0$ for $k\in\mathbb{Z}$, eliminating $a_{i+s_im}^{\prime},\cdots,a_{i+(t_i-1)m}^{\prime}$ in this order by substitution we see that
\begin{equation}\label{wqy1}
a_{i+t_im}+\ast x^{-1}+\cdots+\ast x^{-t_i+s_i}=0,
\end{equation}
where $\ast$ are independent of $x$. We always find some $x\in\mathbb{C}^{\ast}$ not satisfying (\ref{wqy1}),  and then get a contradiction. Therefore, $F=\{\bar0\}$. Similarly, $E=\{\bar0\}$. The lemma follows.
\end{proof}

Motivated by Lemma 3.4 in \cite{CZZ}, we have the following lemma.
\begin{lemma}\label{lemma-Lm}
Let $\Delta$ be a local derivation on ${\mathcal W}$ such that $\Delta(L_{0})=0$. Then for any $m\in\mathbb Z^*$, we have
\begin{eqnarray*}
\Delta(L_{m})=c_{m}L_{m}+d_{m}I_{m}
\end{eqnarray*} for some $c_m, d_m\in\mathbb C$.
\end{lemma}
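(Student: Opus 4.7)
The plan is to adapt the strategy of Lemma~3.4 in \cite{CZZ} (the Witt-algebra analog) to $\mathcal W$. Lemma~\ref{lemma3-1} already gives
\[
\Delta(L_m) = \sum_{k=s}^{t} a_{km} L_{km} + \sum_{k=p}^{q} b_{km} I_{km},
\]
so only multiples of $m$ appear, and what remains is to force $a_{km} = b_{km} = 0$ for every $k \neq 1$.

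For each $x \in \mathbb C^*$ the local derivation property supplies $D_{L_m + xL_0} = \mathrm{ad}\,u_x + y_x \delta$ with $u_x = \sum_n (a'_n L_n + b'_n I_n) \in \mathcal W$ of finite (and $x$-dependent) support and $y_x \in \mathbb C$. Because $\delta$ vanishes on every $L_n$, one has $\delta(L_m + xL_0) = 0$, and together with $\Delta(L_0)=0$ this yields
\[
\Delta(L_m) = \Delta(L_m+xL_0) = [u_x, L_m + xL_0].
\]
Expanding the bracket and matching the coefficients of $L_{km}$ and $I_{km}$ gives, for every $k \in \mathbb Z$, the recurrences
\[
a_{km} = m(k-2)\,a'_{(k-1)m} + xmk\,a'_{km}, \qquad b_{km} = m(k-2)\,b'_{(k-1)m} + xmk\,b'_{km}.
\]

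To kill the upper tail for the $a$-sector, set $t = \max\{k : a_{km} \neq 0\}$ and assume $t \geq 2$. Using the finite support of $u_x$ and the equation at $k = t+1$ (with $a_{(t+1)m}=0$ and $m(t-1)\neq 0$), we obtain $a'_{tm}(x)=0$. The equation at $k = t$ then gives $a'_{(t-1)m} = a_{tm}/(m(t-2))$, requiring $t \geq 3$ (else $a_{2m}=0$, contradiction). Propagating the substitution $a'_{(k-1)m}(x) = [a_{km} - xmk\,a'_{km}(x)] / (m(k-2))$ downward, each $a'_{(t-j)m}(x)$ becomes a polynomial in $x$ of degree $j-1$ whose leading coefficient is a nonzero scalar multiple of $a_{tm}$. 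The substitution stalls at the degenerate index $k = 2$, where the equation reads $a_{2m} = 2xm\,a'_{2m}(x)$; the left side is independent of $x$ while the right side is a polynomial of degree $t-2$ whose top coefficient is a nonzero multiple of $a_{tm}$, forcing $a_{tm}=0$ and contradicting the choice of $t$. A symmetric argument, now starting from $s := \min\{k:a_{km}\neq 0\}$ (assumed $\leq 0$) and running the opposite substitution $a'_{km}(x) = [a_{km} - m(k-2)\,a'_{(k-1)m}(x)] / (xmk)$ upward to the other degeneracy at $k = 0$ (where the equation reads $a_0 = -2m\,a'_{-m}(x)$), produces a Laurent-polynomial identity in $x$ whose extreme-order coefficient is a nonzero multiple of $a_{sm}$, giving $a_{sm}=0$. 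Because $\delta(L_m+xL_0)=0$, the $b$-sector satisfies the identical recurrence and exactly the same two arguments give $b_{km}=0$ for $k \neq 1$, whence $\Delta(L_m) = a_m L_m + b_m I_m$.

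The hard part will be verifying, at each of the two degenerate indices $k=2$ and $k=0$, that after all the substitutions the residual Laurent-polynomial identity in $x$ really does have a nonzero coefficient at the outermost power of $x$ proportional to the outermost surviving $a_{jm}$ (respectively $b_{jm}$); this is the precise mechanism that turns the existence of $u_x$ for all $x\in\mathbb C^*$ into the vanishing of that outermost coefficient, and it requires careful bookkeeping of the degrees and leading terms along both chains of substitutions.
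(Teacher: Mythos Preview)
Your plan is correct and follows essentially the same route as the paper: both adapt the elimination technique of \cite{CZZ}, using $\Delta(L_m)=\Delta(L_m+xL_0)=[u_x,L_m+xL_0]$ to produce recurrences that, after substitution, yield a polynomial (or Laurent-polynomial) identity in $x$ whose extremal coefficient is a nonzero multiple of the outermost surviving $a_{jm}$ (resp.\ $b_{jm}$). Your organization---treating the upper tail (stalling at $k=2$) and lower tail (stalling at $k=0$) separately---is slightly cleaner than the paper's case analysis on $s',t'$, but the mechanism is identical; just note that your claim ``the equation at $k=t+1$ gives $a'_{tm}=0$'' tacitly requires iterating the homogeneous equations for all $k>t$ down from the top of the (finite) support of $u_x$, and symmetrically for $a'_{(s-1)m}=0$ at the bottom.
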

\begin{proof} The proof is essentially same as  that of Lemmas 3.3, 3.4 in \cite{CZZ}.

%\newpage

By Lemma \ref{lemma3-1},   \eqref{equ-Lm1} and \eqref{equ-Lm2}, we have
\begin{eqnarray}\label{wqy2}
\sum\limits_{k=s}^{t}a_{km}L_{km}&=&\sum\limits_{k=s^{\prime}}^{t^{\prime}+1}((k-2)ma_{(k-1)m}^{\prime}+xkma_{km}^{\prime})L_{km}, \label{wqy22}
\end{eqnarray}
where $a_{i+(s'_i-1)m}^{\prime}=a_{i+(t'_i+1)m}^{\prime}
=0$. We may assume that $a _{sm}, a_{tm}, a_{s^{\prime}m}^{\prime}, a_{t^{\prime}m}^{\prime}\neq0$. Clearly, $s^{\prime}\leq s\leq t\leq t^{\prime}+1$, and $s^{\prime}=s$ if $s^{\prime}\neq0$. Our goal is to prove that $s=t=1$, which is essentially same as that of Lemmas 3.3, 3.4 in \cite{CZZ}.

Assume that $s^{\prime}<0$. Then $s^{\prime}=s$. If further $t^{\prime}\geq-1$, from (\ref{wqy2}) we get a set of equations
\begin{eqnarray}\label{wqy3}
a_{sm}&=&xsma_{sm}^{\prime}\nonumber ;\\\nonumber
a_{(s+1)m}&=&(s-1)ma_{sm}^{\prime}+x(s+1)ma_{(s+1)m}^{\prime};\\
&\vdots&\nonumber\\
a_{-m}&=&-3ma_{-2m}^{\prime}-xma_{-m}^{\prime};\nonumber\\
a_{0}&=&-2ma_{-m}^{\prime}.
\end{eqnarray}

If $a_{0}\neq0$, using the same arguments as for (\ref{txm6}), the equations in  (\ref{wqy3}) make a contradiction. So we consider the case that $a_{0}=0$. From  (\ref{wqy3}), we see that $a_{-m}^{\prime}=0$. We continue upwards in (\ref{wqy3}) in this manner to some steps. Then there exists a non-positive integer $l$ such that
\begin{eqnarray*}
a_{0}=a_{-m}=\cdots=a_{lm}=0,a_{(l-1)m}\neq0,a_{-m}^{\prime}=\cdots=a_{(l-1)m}^{\prime}=0.
\end{eqnarray*}
If $s+1<l\, (\leq0)$, then (\ref{wqy3}) becomes
\begin{eqnarray}\label{wqy4}
a_{sm}&=&xsma_{sm}^{\prime}\nonumber ;\\\nonumber
a_{(s+1)m}&=&(s-1)ma_{sm}^{\prime}+x(s+1)ma_{(s+1)m}^{\prime};\\\nonumber
&\vdots&\\
a_{(l-1)m}&=&(l-3)ma_{(l-2)m}^{\prime}.
\end{eqnarray}
Using the same arguments as for (\ref{txm6}), the equations in (\ref{wqy4}) make a contradiction. We need only to consider the case that $s+1=l$, i.e., $l-1=s$. In this case we have that $0=a_{(l-1)m}^{\prime}=a_{s^{\prime}m}^{\prime}\neq0$, again a contradiction. Therefore, $s^{\prime}<0,s^{\prime}=s,t^{\prime}<-1$.

If $t<t^{\prime}+1$, from (\ref{wqy2}) we see that
\begin{eqnarray*}
(t^{\prime}-1)ma_{t^{\prime}m}=0.
\end{eqnarray*}
Then $(t^{\prime}-1)m=0$, i.e., $t^{\prime}=1$, a contradiction. So $t=t^{\prime}+1$ and $s<t$. We get a set of equations from (\ref{wqy2})
\begin{eqnarray}\label{wqy5}
a_{sm}&=&xsma_{sm}^{\prime}\nonumber ;\\\nonumber
a_{(s+1)m}&=&(s-1)ma_{sm}^{\prime}+x(s+1)ma_{(s+1)m}^{\prime};\\\nonumber
&\vdots&\\
a_{tm}&=&(t-2)ma_{(t-1)m}^{\prime}.
\end{eqnarray}
Using the same arguments as for (\ref{txm6}), the equations in (\ref{wqy5}) make a contradiction. Hence $s^{\prime}\geq0$.

If $s^{\prime}\geq1$, then $s=s^{\prime}\geq1$. If $s^{\prime}=0$, then $a_{0}=0$ by (\ref{wqy2}). So $s\geq1$.

Now we have $t\geq s\geq1$. The left is to prove that $t=1$ in (\ref{wqy2}). Otherwise, we assume that $t>1$, and then $t^{\prime}>0$.
\vskip5pt
\noindent{\bf Case 1}: $t^{\prime}>1$.

In this case we can show that $t=t^{\prime}+1$ as in the above arguments. If $s^{\prime}\geq1$, we see that $s=s^{\prime}$ and $s<t$. From (\ref{wqy2}) we obtain a set of (at least two) equations
\begin{eqnarray}\label{wqy6}
a_{sm}&=&xsma_{sm}^{\prime}\nonumber ;\\\nonumber
a_{(s+1)m}&=&(s-1)ma_{sm}^{\prime}+x(s+1)ma_{(s+1)m}^{\prime};\\\nonumber
&\vdots&\\
a_{tm}&=&(t-2)ma_{(t-1)m}^{\prime}.
\end{eqnarray}
Using the same arguments as for (\ref{txm6}), the equations in (\ref{wqy6}) make a contradiction. So $s^{\prime}=0$. Now we have
\begin{eqnarray*}
s^{\prime}=0, s\geq1,t=t^{\prime}+1>2.
\end{eqnarray*}
From (\ref{wqy2}) we obtain a set of (at least two) equations
\begin{eqnarray}\label{wqy9}
a_{2m}&=&2xma_{2m}^{\prime}\nonumber ;\\\nonumber
a_{3m}&=&ma_{2m}^{\prime}+3xma_{3m}^{\prime};\\\nonumber
&\vdots&\\
a_{tm}&=&(t-2)ma_{(t-1)m}^{\prime}.
\end{eqnarray}
Using the same arguments again, the equations in (\ref{wqy9}) make a contradiction.  So this case is not held.
\vskip5pt
\noindent{\bf Case 2}: $t^{\prime}=1$.
In the case $t=2$ and we still have the last equation in  (\ref{wqy9}), which implies that $a_{2m}=0$. It is a contradiction.

Combining with Case 1 and Case 2, we get that $s=t=1$.

Similarly, by Lemma \ref{lemma3-1},   \eqref{equ-Lm1} and \eqref{equ-Lm2}, we also have
\begin{eqnarray}
\sum\limits_{k=p}^{q}b_{km}I_{km}&=&\sum\limits_{k=p^{\prime}}^{q^{\prime}+1}((k-2)mb_{(k-1)m}^{\prime}+xkmb_{km}^{\prime})I_{km}, \nonumber
\end{eqnarray}
where $b_{i+(p'_i-1)m}^{\prime}=b_{i+(q'_i+1)m}^{\prime}
=0$.
By the same considerations as above we can get $p=q=1$. The lemma follows.
\end{proof}

\begin{lemma}\label{lemma-Lm0}
Let $\Delta$ be a local derivation on ${\mathcal W}$ such that $\Delta(L_{0})=\Delta(L_{1})=0$. Then $\Delta( L_{m})=0$ for any $m\in\mathbb{Z}$.
\end{lemma}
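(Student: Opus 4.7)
The plan is to apply Lemma \ref{lemma-Lm} to reduce the statement to the vanishing of two scalars for each nonzero $m$, and then to exploit the local derivation condition on $L_m + xL_1$ for arbitrary $x \in \mathbb{C}^*$ by the same chain/recursion technique already used in the proof of Lemma \ref{lemma-Lm}. By Lemma \ref{lemma-Lm} we may write $\Delta(L_m) = c_m L_m + d_m I_m$ for every $m \in \mathbb{Z}^*$ and some $c_m, d_m \in \mathbb{C}$; the hypothesis $\Delta(L_1) = 0$ forces $c_1 = d_1 = 0$, so what remains is to prove $c_m = d_m = 0$ for every $m \in \mathbb{Z}^*$ with $m \neq 1$.

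Fix such an $m$ and any $x \in \mathbb{C}^*$. The local derivation property applied to $L_m + xL_1$, combined with Lemma \ref{der-w22}, supplies $\alpha \in \mathbb{C}$ and a finite sum $u = \sum_n(a_n L_n + b_n I_n) \in {\mathcal W}$ satisfying
$$
c_m L_m + d_m I_m \;=\; \Delta(L_m + xL_1) \;=\; [u,\, L_m + xL_1] + \alpha\,\delta(L_m + xL_1) \;=\; [u,\, L_m + xL_1],
$$
the last equality using $\delta(L_j) = 0$. Comparing coefficients of $L_k$ and $I_k$ on the two sides yields, for every $k \neq m$, the pair of recursions
$$
(k-2m)\,a_{k-m} + x(k-2)\,a_{k-1} = 0, \qquad (k-2m)\,b_{k-m} + x(k-2)\,b_{k-1} = 0,
$$
together with the boundary identities $c_m = -m\,a_0 + x(m-2)\,a_{m-1}$ and $d_m = -m\,b_0 + x(m-2)\,b_{m-1}$ obtained at $k = m$.

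Reindexing via $j = k - m$, these recursions link $a_j$ with $a_{j+m-1}$, so the support of $u$ decomposes into chains indexed by $j \bmod (m-1)$ (and similarly for the $b_n$). The chain through $a_0$ and $a_{m-1}$ can fail to propagate forward only at $j = m$ and fail to propagate backward only at $j = 2-m$. A short divisibility check — both $m = k(m-1)$ and $2-m = k(m-1)$ force $(m-1)\mid 1$, hence $m \in \{0,2\}$ — shows that for $m \neq 0,2$ neither obstruction lies in the chain, so the chain is bi-infinite as soon as one of its entries is nonzero; the finite support of $u$ then forces $a_0 = a_{m-1} = 0$, and hence $c_m = 0$. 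The boundary case $m = 2$ is disposed of by the same explicit cascade as in Lemma \ref{lemma-Lm}: the forward recursion gives $a_k = 0$ for $k \geq 3$, while the backward recursion from $a_0$ produces an infinite nonzero tail unless $a_0 = 0$, so once again $c_2 = -2\,a_0 = 0$. The parallel analysis for the $b_n$ yields $d_m = 0$, completing the proof.

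The main technical obstacle is only the combinatorial bookkeeping above — identifying the two possible termination indices $j = m$ and $j = 2-m$ and verifying that for $m \neq 0,2$ they cannot lie in the chain containing $a_0$ — so no genuinely new idea beyond the method of Lemma \ref{lemma-Lm} is required, and the presence of the additional $I_n$-coordinates only doubles the number of parallel recursions to run.
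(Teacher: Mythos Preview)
Your argument is correct and follows the same strategy as the paper: evaluate $\Delta$ at $L_m+xL_1$ and analyse the resulting support constraints on $u$. The paper simply sets $x=1$, asserts (without details) that the support of $u$ lies in $\{0,1,m\}$, and then reads off $c_m=d_m=0$; your chain/recursion analysis is exactly the justification of that support claim. Two minor remarks: the free parameter $x$ is never actually varied in your argument (a single value, e.g.\ $x=1$, already suffices, unlike in Lemma~\ref{lemma-Lm}), and your phrase ``bi-infinite chain'' glosses over the fact that the homogeneous relation is missing at $j=0$, so one really argues separately that the half-chain below $a_0$ and the half-chain above $a_{m-1}$ are each unobstructed and hence force $a_0=a_{m-1}=0$ --- but your conclusion is unaffected.
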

\begin{proof}
If $m\geq2$, by Lemma \ref{lemma-Lm}, there exist $c_{m},d_{m}\in\mathbb{C}$ and
\begin{eqnarray*}
\sum\limits_{i\in I}a'_iL_{i}+\sum\limits_{j\in J}b'_{j}I_{j}\in {\mathcal W},
\end{eqnarray*}
where $a_i', b_j'\in\mathbb C$ for any $i\in I, j\in J$, and $I, J$ are finite subsets of $\mathbb Z$, such that
\begin{eqnarray}\label{w1}
c_{m}L_{m}+d_{m}I_{m}\nonumber&=&\Delta( L_{m})=\nonumber\Delta( L_{m}+ L_{1})\\\nonumber
&=&\nonumber[\sum\limits_{i\in I}a'_iL_{i}+\sum\limits_{j\in J}b'_{j}I_{j}, L_{m}+L_{1}].
\end{eqnarray}
Clearly $I, J\subset\{0, 1, m\}$. By easy calculations we have $c_{m}=d_m=0$. Similarly, if $m<0$, we can also get $c_{m}=d_{m}=0$. The proof is completed.
\end{proof}

\begin{theorem}\label{lemma-vir}
Let $\Delta$ be a local derivation on ${\mathcal W}$. Then there exists $D\in {\rm Der}\,{\mathcal W}$ such that $\Delta(L_m)=D(L_m)$ for any $m\in\mathbb Z$.
\end{theorem}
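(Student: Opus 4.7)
The plan is to reduce to the hypotheses of Lemma~\ref{lemma-Lm0} by subtracting suitable inner derivations from $\Delta$ so that the resulting local derivation annihilates both $L_0$ and $L_1$. The key observation, already used in Corollary~\ref{coro-Vir}, is that for any $\sigma\in\mathrm{Der}({\mathcal W})$, the difference $\Delta-\sigma$ is again a local derivation: for each $x\in{\mathcal W}$ with witness $\Delta(x)=D_x(x)$, the map $D_x-\sigma\in\mathrm{Der}({\mathcal W})$ witnesses $(\Delta-\sigma)(x)$.

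First I would normalize the action at $L_0$. Since $\Delta(L_0)=D_{L_0}(L_0)$ with $D_{L_0}\in\mathrm{Der}({\mathcal W})=\mathrm{Inn}({\mathcal W})\oplus\mathbb{C}\delta$ and $\delta(L_0)=0$, we have $\Delta(L_0)=[u,L_0]$ for some $u\in{\mathcal W}$. Writing $u$ in the basis and using $[L_n,L_0]=nL_n$, $[I_n,L_0]=nI_n$, it follows that $\Delta(L_0)=\sum_{n\neq 0}(\alpha_nL_n+\beta_nI_n)$ has no $L_0$ or $I_0$ component. Setting
\[
v:=\sum_{n\neq 0}\frac{1}{n}(\alpha_nL_n+\beta_nI_n)\in{\mathcal W},
\]
one checks $[v,L_0]=\Delta(L_0)$, so $\Delta_1:=\Delta-\mathrm{ad}\,v$ is a local derivation with $\Delta_1(L_0)=0$.

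Next, by Lemma~\ref{lemma-Lm} applied to $\Delta_1$, we have $\Delta_1(L_1)=c_1L_1+d_1I_1$ for some $c_1,d_1\in\mathbb{C}$. The element $w:=-c_1L_0-d_1I_0$ lies in the centralizer of $L_0$ (so $[w,L_0]=0$) and satisfies $[w,L_1]=c_1L_1+d_1I_1$. Therefore $\Delta_2:=\Delta_1-\mathrm{ad}\,w$ is a local derivation with $\Delta_2(L_0)=\Delta_2(L_1)=0$. Applying Lemma~\ref{lemma-Lm0} yields $\Delta_2(L_m)=0$ for every $m\in\mathbb{Z}$. Hence $D:=\mathrm{ad}(v+w)\in\mathrm{Inn}({\mathcal W})\subseteq\mathrm{Der}({\mathcal W})$ satisfies $\Delta(L_m)=D(L_m)$ for all $m\in\mathbb{Z}$, as required.

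Since the previous lemmas have done the heavy lifting, the only subtlety in this plan is to ensure that the second normalization (killing $L_1$) does not undo the first (killing $L_0$); this is precisely why $w$ is chosen in $\mathrm{span}\{L_0,I_0\}$, the centralizer of $L_0$ in ${\mathcal W}$. I do not expect any real obstacle beyond this bookkeeping.
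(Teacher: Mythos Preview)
Your proof is correct and follows essentially the same approach as the paper: subtract an inner derivation to kill $\Delta(L_0)$, use Lemma~\ref{lemma-Lm} to see $\Delta_1(L_1)\in\mathbb{C}L_1+\mathbb{C}I_1$, then subtract $\mathrm{ad}(-c_1L_0-d_1I_0)$ and invoke Lemma~\ref{lemma-Lm0}. Your write-up is slightly more explicit (constructing $v$ rather than just citing the existence of $y$, and spelling out why $w\in\mathrm{span}\{L_0,I_0\}$ preserves the first normalization), but the argument is the same.
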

\begin{proof}
Let $\Delta$ be a local derivation on ${\mathcal W}$.  There exists $y\in {\mathcal W}$ such that $\Delta(L_{0})=[y, L_{0}]$. Set $\Delta_{1}=\Delta-\mathrm{ad}(y)$. Then $\Delta_{1}$ is a local derivation such that $\Delta_{1}(L_{0})=0$.  By Lemma \ref{lemma-Lm}, there are $c_{1}, d_{1}\in\mathbb{C}$ such that
\begin{eqnarray*}
\Delta_{1}(L_{1})=c_{1}L_{1}+d_{1}I_{1}.
\end{eqnarray*}
Set $\Delta_{2}=\Delta_{1}+c_{1}\mathrm{ad}(L_{0})+d_{1}\mathrm{ad}(I_{0})$. Then $\Delta_{2}$ is a local derivation such that
\begin{eqnarray*}
\Delta_{2}(L_{0})=0,\Delta_{2}(L_{1})=0.
\end{eqnarray*}
By Lemma \ref{lemma-Lm0},  we have $
\Delta_{2}(L_{m})=0$ for any $m\in\mathbb{Z}$. The theorem holds.
\end{proof}

\section{Local derivations on $W(2,2)$ }

Now we use a key construction to determine $\Delta(I_m)$ and then to determine all local derivations on $W(2,2)$.

Set $L'_{m}=L_{m}+mI_{m}$, we can easily see that  $[L'_{m},L'_{n}]=(m-n)L'_{m+n}$ and $[L'_{m}, I_{n}]=(m-n)I_{m+n}$.
So we can get a new construction of ${\mathcal W}$, which plays a key role in our research.

\begin{lemma}\label{key-construction1}
The subalgebra ${\rm span}_{\mathbb C}\{L_m', I_m\mid m\in\mathbb Z\}$ of $\mathcal W$ is isomorphic to $\mathcal W$.
\end{lemma}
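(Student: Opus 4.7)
The plan is to define an explicit isomorphism $\phi:\mathcal{W}\to\mathrm{span}_{\mathbb C}\{L_m',I_m\mid m\in\mathbb Z\}$ by setting $\phi(L_m)=L_m'$ and $\phi(I_m)=I_m$ for every $m\in\mathbb Z$, extended linearly, and then to verify it is a Lie algebra isomorphism. Since $L_m=L_m'-mI_m$, the spanning set $\{L_m',I_m\mid m\in\mathbb Z\}$ is obtained from the basis $\{L_m,I_m\mid m\in\mathbb Z\}$ by an invertible (block upper-triangular, with identity diagonal blocks) change of basis, so it is itself a basis of $\mathcal W$. Hence $\phi$ is automatically a linear bijection, and the purported subalgebra actually coincides with $\mathcal W$ as a vector space; the only genuine content is bracket-preservation.

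The main computation is to check the three bracket relations against those defining $\mathcal W$. Using the relations in $\mathcal W$ (where $C=C_1=0$), one computes
\begin{align*}
[L_m',L_n']&=[L_m+mI_m,\,L_n+nI_n]\\
&=[L_m,L_n]+n[L_m,I_n]+m[I_m,L_n]+mn[I_m,I_n]\\
&=(m-n)L_{m+n}+(m-n)(m+n)I_{m+n}\\
&=(m-n)\bigl(L_{m+n}+(m+n)I_{m+n}\bigr)=(m-n)L_{m+n}',
\end{align*}
while $[L_m',I_n]=[L_m,I_n]+m[I_m,I_n]=(m-n)I_{m+n}$ and $[I_m,I_n]=0$. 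These are exactly the defining relations of $\mathcal W$ transported through $\phi$, so $\phi$ preserves brackets.

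There is no real obstacle: the lemma is essentially a bracket-verification dressed up as a structural statement. The one mild subtlety worth noting is that the analogous statement fails at the level of $W(2,2)$ itself because of the central extension (the cocycle terms $\delta_{m+n,0}\tfrac{1}{12}(m^3-m)$ would interfere), which is precisely why the construction is made in the centerless quotient $\mathcal W$. I would conclude by stating that $\phi$ is a Lie algebra automorphism of $\mathcal W$, and therefore the subalgebra in question is isomorphic to $\mathcal W$ as claimed.
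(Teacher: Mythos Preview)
Your proof is correct and follows essentially the same approach as the paper: the paper simply records the bracket identities $[L_m',L_n']=(m-n)L_{m+n}'$ and $[L_m',I_n]=(m-n)I_{m+n}$ in the paragraph preceding the lemma and states the result without further argument. You supply the full computation and additionally observe that $\{L_m',I_m\}$ is already a basis of $\mathcal W$, so that the ``subalgebra'' is all of $\mathcal W$ and $\phi$ is in fact an automorphism; this is more detail than the paper gives, but the underlying idea is identical.
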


\begin{lemma}\label{lem-Im}
 Let $\Delta$ be a local derivation on ${\mathcal W}$ such that $\Delta(L_m)=0$ for any $m\in\mathbb Z$. Then $\Delta(I_m)\in\mathbb C I_m$.
\end{lemma}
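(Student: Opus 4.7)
The plan is to use the automorphism built from the key construction of Lemma \ref{key-construction1} to convert information about $\Delta(I_m)$ into information about the Virasoro-part action of a conjugated local derivation, where the already-established Lemma \ref{lemma-Lm} can be applied.

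First I would record that $\Delta(I_m)$ automatically lies in the abelian ideal $I:=\mathrm{span}_{\mathbb C}\{I_j:j\in\mathbb Z\}$. By Lemma \ref{der-w22} every derivation of $\mathcal W$ has the form $D=\mathrm{ad}(u)+\lambda\delta$, and both $\mathrm{ad}(u)(I_m)$ (since $[I_k,I_m]=0$) and $\delta(I_m)=I_m$ lie in $I$; hence $\Delta(I_m)=D_{I_m}(I_m)\in I$.

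Next, the assignment $\phi\colon L_m\mapsto L_m+mI_m$, $I_m\mapsto I_m$ is a Lie algebra automorphism of $\mathcal W$ (the bracket relations are exactly what Lemma \ref{key-construction1} gives, and $\phi$ is bijective because $L_m=\phi(L_m)-mI_m$). Conjugation by $\phi$ produces a new local derivation $\Delta':=\phi^{-1}\circ\Delta\circ\phi$ (conjugation of a derivation by an automorphism is a derivation, so the local-derivation property transfers). Using the hypothesis $\Delta(L_m)=0$ together with the fact that $\phi^{-1}$ fixes every element of $I$, one finds
\[
\Delta'(L_0)=0,\qquad \Delta'(L_m)=\phi^{-1}\bigl(\Delta(L_m+mI_m)\bigr)=\phi^{-1}\bigl(m\Delta(I_m)\bigr)=m\Delta(I_m)\quad(m\neq 0).
\]
Applying Lemma \ref{lemma-Lm} to $\Delta'$ yields $\Delta'(L_m)=c_mL_m+d_mI_m$ for some $c_m,d_m\in\mathbb C$; comparing this with $m\Delta(I_m)\in I$ forces $c_m=0$, whence $\Delta(I_m)=(d_m/m)I_m\in\mathbb CI_m$ for every $m\neq 0$.

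The main obstacle is the case $m=0$, for which the conjugation argument collapses to the vacuous identity $0\cdot\Delta(I_0)=0$. Here I would argue directly by applying the local-derivation condition at test elements of the form $L_n+I_0$ with $n\in\mathbb Z^*$: the requirement that $D_{L_n+I_0}(L_n+I_0)$ lie in $I$ forces the inner generator of $D_{L_n+I_0}$ to be supported at $L_n$, and a short computation then shows that the $I_{2n}$-coefficient of $\Delta(I_0)$ must vanish. Varying $n$ kills every even-indexed component of $\Delta(I_0)$ other than the $I_0$-component; the remaining odd-indexed components should then be eliminated by combining further test elements (such as $L_n+L_{-n}+I_0$) with the relations $\Delta(I_n)\in\mathbb CI_n$ for $n\neq 0$ just established. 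I expect the treatment of the odd-indexed components in the $m=0$ case to be the most delicate part of the argument.
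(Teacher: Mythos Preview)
Your treatment of the case $m\neq 0$ is correct and matches the paper's argument; the paper phrases it as ``the proofs of Lemmas \ref{lemma3-1}, \ref{lemma-Lm} yield $\Delta(L'_m)=c_mL'_m+d_mI_m$'' and then expands $L'_m=L_m+mI_m$, which is exactly your conjugation argument $\Delta'=\phi^{-1}\Delta\phi$ unwound. Your packaging via the automorphism $\phi$ is if anything cleaner.

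The genuine gap is at $m=0$. Your test element $L_n+I_0$ indeed forces the coefficient of $I_{2n}$ in $\Delta(I_0)$ to vanish, but that is the \emph{only} constraint it produces: for every other index $k$ the equation $e_k=b^{(n)}_{k-n}(k-2n)+\cdots$ simply determines $b^{(n)}_{k-n}$, and since $\Delta(I_0)$ has finite support the resulting $(b^{(n)}_j)$ is automatically finite. So this family kills exactly the even nonzero indices and nothing more. Your proposed fallback $L_n+L_{-n}+I_0$ does not close the gap either: once one checks that the $L$-part of the generator must be $a(L_n+L_{-n})$, the remaining equations for the $b_j$ form a two-term recurrence along arithmetic progressions of step $2n$, and with the free parameters $a,\lambda$ available one can always find a finitely supported solution for any prescribed odd-indexed coefficients $e_k$. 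In short, neither test family constrains the odd part of $\Delta(I_0)$, and ``combining with $\Delta(I_n)\in\mathbb C I_n$'' does not help because those relations were already derived from the same underlying structure.

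The paper's argument for $m=0$ is different and avoids this parity obstruction entirely. Writing $\Delta(I_0)=\sum_{i=s}^{t}a_iI_i$, one evaluates $\Delta$ at $I_0+L_p+L_q$ with $p<s$, $p<0$, $q\gg t$ and $p+q>t$. The vanishing of the $L$-component forces the $L$-part of the generator to be a scalar multiple of $L_p+L_q$; a highest/lowest-index argument then pins the $I$-part of the generator to $\{p,0,q\}$. The image is therefore supported on $\{p,\,q,\,p+q,\,0\}$, and since $p,q,p+q\notin[s,t]$ by the choice of $p,q$, comparison with $\sum_{i=s}^{t}a_iI_i$ gives $a_i=0$ for all $i\neq 0$ in one stroke. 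Replacing your $m=0$ sketch with this single asymmetric test element completes the proof.
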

\begin{proof}

By the definition of local derivation and Lemma \ref{der-w22}, we have
\begin{eqnarray}\label{ww222}
\Delta(I_m)\in \oplus_{k\in\mathbb Z}\mathbb CI_k, \ \forall m\in\mathbb Z.
\end{eqnarray}

\noindent{\bf Case 1}: $m\neq0$.

 The proofs of Lemmas \ref{lemma3-1}, \ref{lemma-Lm} yield that $\Delta(L'_{m})=c_{m}L'_{m}+d_{m}I_{m}$ for some $c_m, d_m\in\mathbb C$. Therefore, we see that
\begin{eqnarray}\label{q1q}
\Delta(L'_{m})=\Delta(L_{m}+mI_{m})=\Delta(L_{m})+m\Delta(I_{m})=c_{m}(L_{m}+mI_{m})+d_{m}I_{m}.
\end{eqnarray}
This, together with \eqref{ww222}, yields  $\Delta(I_m)\in\mathbb C I_m$.

\noindent{\bf Case 2}: $m=0$.

Set
\begin{eqnarray*}
\Delta(I_{0})=\sum\limits_{i=s}^{t}a_{i}I_{i},
\end{eqnarray*}
where $s\le t, a_i\in\mathbb C$ for any $i$ and $a_s, a_t\ne 0$.

Choose $p<s, p<0, q>>t$ such that $p+q>t$.
For $I_{0}+L_{p}+L_{q}$, there exist $a_x=\sum_{i\in I}a_i'L_i+\sum_{j\in J}b_j'I_j\in {\mathcal W}, a'\in\mathbb C$, where $a_i', b_j'\in\mathbb C^*$ for any $i\in I, j\in J$, such that
\begin{eqnarray}\label{qq1}
\sum\limits_{i=s}^{t}a_iI_{i}&=&\Delta(I_{0})=\Delta(I_{0}+L_{p}+L_{q})\nonumber\\
&=&[\sum_{i\in I}a_i'L_i+\sum_{j\in J}b_j'I_j, I_{0}+L_{p}+L_{q}]+a'\delta(I_0).
\end{eqnarray}
So $\sum_{i\in I}a_i'L_i=b'(L_{p}+L_{q})$ for some $b'\in\mathbb C$, and
${\rm max}\,\{j\mid b_j'\ne0\}\le q$ and ${\rm min}\,\{j\mid b_j'\ne0\}\ge p$.

In this case we claim that $J\subset\{p, q, 0\}$. In fact,  if $l={\rm max}\{j\in J\mid b_j'\ne 0, j\ne q\}\ge0$, then there exists a nonzero term $b_l'(l-q)I_{q+l}$, where $q+l>t$ in the right hand side of \eqref{qq1}. It is a contradiction. If $l={\rm min}\{j\in J\mid b_j\ne 0, j\ne p\}<0$, then there exists a nonzero term $b_l'(l-p)I_{p+l}$, where $p+l<s$ in the right hand side of \eqref{qq1}. It is also a contradiction. So the claim holds.

Now we can suppose that
\begin{equation}a_x=b'(L_{p}+L_{q})+b_p'I_{p}+b_q'I_{q}+b_0'I_0.\nonumber\end{equation}
Comparing with the coefficients of $I_i, s\le i\le t$, we get $a_i=0$ for any $i\ne 0$.
The proof is completed.
\end{proof}

Now we are in position to get the main result of this paper.

\begin{theorem}\label{main}
Every local derivation
on ${\mathcal W}$ is a derivation.
\end{theorem}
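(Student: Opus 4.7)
The plan is to reduce $\Delta$ to a small residual piece using Theorem \ref{lemma-vir} and Lemma \ref{lem-Im}, then eliminate that residue via the key construction of Lemma \ref{key-construction1} together with one further explicit calculation. By Theorem \ref{lemma-vir}, there exists a derivation $D_1$ of $\mathcal{W}$ with $\Delta(L_m) = D_1(L_m)$ for every $m$; replacing $\Delta$ by $\Delta - D_1$ I may assume $\Delta(L_m) = 0$ for all $m$. Lemma \ref{lem-Im} then gives $\Delta(I_m) = \lambda_m I_m$ for some $\lambda_m \in \mathbb{C}$. Any derivation $\mathrm{ad}(u) + c\delta$ of $\mathcal{W}$ that annihilates every $L_m$ must satisfy $[u, L_m] = 0$ for all $m$, and the centralizer of $\{L_m \mid m \in \mathbb{Z}\}$ in $\mathcal{W}$ is trivial, so the only derivations that survive are the scalar multiples of $\delta$. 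It thus suffices to prove that $\lambda_m$ does not depend on $m$.

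To show $\lambda_m = \lambda$ is constant for $m \ne 0$, I transport $\Delta$ along the key construction. A direct check on generators shows that every derivation of $\mathcal{W}$ preserves the subalgebra $W' := \mathrm{span}\{L'_m, I_m \mid m \in \mathbb{Z}\}$, so $\Delta|_{W'}$ is a local derivation on $W'$. Passing through the isomorphism $\psi : W' \to \mathcal{W}$ of Lemma \ref{key-construction1} (with $\psi(L'_m) = L_m$ and $\psi(I_m) = I_m$), I obtain a local derivation $\tilde\Delta$ on $\mathcal{W}$ satisfying $\tilde\Delta(L_m) = m \lambda_m I_m$ and $\tilde\Delta(I_m) = \lambda_m I_m$. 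Applying Theorem \ref{lemma-vir} to $\tilde\Delta$ produces a derivation $\mathrm{ad}(v) + c'\delta$ with $[v, L_m] = m \lambda_m I_m$ for every $m$. Writing $v = \sum_k \alpha_k L_k + \sum_k \beta_k I_k$ and matching coefficients of $L_j$ forces $\alpha_l(l - m) = 0$ for every $l$ and every $m$, hence $\alpha_l = 0$ for all $l$. The coefficient of $I_m$ in $[v, L_m]$ then reads $-m\beta_0 = m\lambda_m$, so $\lambda_m = -\beta_0$ is a constant $\lambda$ independent of $m \ne 0$.

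Finally, to prove $\lambda_0 = \lambda$, set $\Delta' := \Delta - \lambda \delta$: this is still a local derivation, with $\Delta'(L_m) = 0$ for all $m$, $\Delta'(I_m) = 0$ for $m \ne 0$, and $\Delta'(I_0) = \mu I_0$ where $\mu := \lambda_0 - \lambda$. Applying the definition of local derivation to $x := I_0 + I_1 + I_2$, some derivation $\mathrm{ad}(u) + c \delta$ with $u = \sum_k a_k L_k + \sum_k b_k I_k$ must satisfy $[u, x] + c \delta(x) = \mu I_0$. The coefficient of $I_j$ in this identity yields the three-term recurrence
$$ j\, a_j + (j - 2)\, a_{j-1} + (j - 4)\, a_{j-2} = 0 \qquad \text{for every } j \notin \{0, 1, 2\}. $$
Since $u$ has finite support, propagating this recurrence inward from $j \to +\infty$ forces $a_k = 0$ for $k \ge 3$ together with $a_1 = a_2$, while propagating from $j \to -\infty$ forces $a_k = 0$ for all $k \le -1$. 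The three remaining equations at $j = 0, 1, 2$ then read $c = \mu$, $a_1 - a_0 + c = 0$, and $2a_1 - 2a_0 + c = 0$; subtracting twice the second from the third gives $c = 0$, whence $\mu = 0$ and $\lambda_0 = \lambda$. The main obstacle is precisely this finite-support step: the three-term recurrence alone admits bi-infinite solution families, and only the finiteness of $\mathrm{supp}\, u$ collapses that freedom into the constraints $a_1 = a_2$ and $a_{-1} = a_{-2} = 0$ needed to eliminate $\mu$.
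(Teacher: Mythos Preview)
Your proof is correct and follows essentially the same route as the paper's: reduce to $\Delta(L_m)=0$ via Theorem \ref{lemma-vir}, use Lemma \ref{lem-Im} to get $\Delta(I_m)=\lambda_m I_m$, apply Theorem \ref{lemma-vir} again through the key construction of Lemma \ref{key-construction1} to force $\lambda_m$ constant for $m\neq 0$, and finish with the test element $I_0+I_1+I_2$. One cosmetic remark: since $L_m=L_m'-mI_m$, the subalgebra $W'=\mathrm{span}\{L_m',I_m\}$ is all of $\mathcal W$, so your preservation claim is automatic and the ``restriction'' is really conjugation by the automorphism $\psi$; the paper handles this implicitly by writing $\Delta_2(L_m')=D(L_m')$ directly.
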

\begin{proof}
Let $\Delta$ be a local derivation on ${\mathcal W}$.  There exists $u\in {\mathcal W}$ such that $\Delta(L_{0})=[u, L_{0}]$. Set $\Delta_{1}=\Delta-\mathrm{ad}(u)$. Then $\Delta_{1}$ is a local derivation such that $\Delta_{1}(L_{0})=0$.  By Lemma \ref{lemma-Lm}, there exist $c_{1}, d_{1}\in\mathbb{C}$ such that
\begin{eqnarray*}
\Delta_{1}(L_{1})=c_{1}L_{1}+d_{1}I_{1}.
\end{eqnarray*}
Set $\Delta_{2}=\Delta_{1}+c_{1}\mathrm{ad}(L_{0})+d_{1}\mathrm{ad}(I_{0})$. Then $\Delta_{2}$ is a local derivation such that
\begin{eqnarray*}
\Delta_{2}(L_{0})=0,\Delta_{2}(L_{1})=0.
\end{eqnarray*}
By Lemma \ref{lemma-Lm0},  we have
\begin{eqnarray*}
\Delta_{2}(L_{m})=0,\forall m\in\mathbb{Z}.
\end{eqnarray*}

By Lemma \ref{key-construction1} and Theorem \ref{lemma-vir}, there exists $D\in {\rm Der}\,{\mathcal W}$ such that $\Delta_2(L_m')=D(L_m')$ for any $m\in\mathbb Z$.
So
$$ \label{delta-0}m\Delta_2(I_m)=D(L_m)+mD(I_m).$$
By Lemma \ref{lem-Im}, we have $\Delta_2(I_m)\in \mathbb CI_m$. So $D=a{\rm ad}\, I_0+b\delta$ for some $a, b\in
\mathbb C$ by Lemma \ref{der-w22}.
In this case
$\Delta_2(I_m)=(b-a)I_m$ for any $m\in\mathbb Z^*$.

Set  $\Delta_3=\Delta_2-(b-a)\delta$ we get
\begin{equation} \label{delta-1}
\Delta_3(L_m)=0,\  \Delta_3(I_n)=0,\  \forall m\in\mathbb Z, n\in\mathbb Z^*.\nonumber
\end{equation}

Now we can suppose that
$\Delta_3( I_0)=cI_0$ for some $c\in\mathbb C$.
So there exist $b_x=\sum_{i\in K}a_k'L_k\in {\mathcal W}, d\in\mathbb C$, where $a_k'\in\mathbb C$ for any $k\in K$, such that
\begin{eqnarray}\label{qq001}
\Delta(I_{0})=\Delta(I_{0}+I_1+I_2)=[\sum_{k\in K}a_k'L_k, I_{0}+I_1+I_2]+d\delta(I_0+I_1+I_2)=cI_0.
\end{eqnarray}
Clearly, $K\subset\{0, 1, 2\}$. By easy calculations in \eqref{qq001} we can get $c=0$.
The proof is completed.
\end{proof}

\begin{corollary}\label{coro-w22}
Every local derivation
on $W(2,2)$ is a derivation.
\end{corollary}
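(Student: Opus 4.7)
My plan is to reduce the corollary to Theorem~\ref{main} by quotienting out the center $\mathbb{C}C\oplus\mathbb{C}C_1$ and then clean up the residual central behavior by testing on a carefully chosen element. Let $\Delta$ be a local derivation on $W(2,2)$. First I would observe that by Lemma~\ref{der-w22} every derivation of $W(2,2)$ kills $C$ and stabilizes $\mathbb{C}C_1$; evaluating at $C$ and $C_1$ therefore gives $\Delta(C)=0$ and $\Delta(C_1)\in\mathbb{C}C_1$, so $\Delta$ preserves the center and descends to a well-defined linear map $\bar\Delta:\mathcal{W}\to\mathcal{W}$. Since each derivation $D_x$ of $W(2,2)$ also descends to a derivation of $\mathcal{W}$, the induced map $\bar\Delta$ is a local derivation on $\mathcal{W}$. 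Theorem~\ref{main} then writes $\bar\Delta=\mathrm{ad}_{\mathcal{W}}(\bar u)+b\bar\delta$ for some $\bar u\in\mathcal{W}$ and $b\in\mathbb{C}$. Lifting $\bar u$ to any $u\in W(2,2)$ and setting $D:=\mathrm{ad}(u)+b\delta\in\mathrm{Der}(W(2,2))$, the difference $\Delta':=\Delta-D$ is a local derivation of $W(2,2)$ whose image lies in $\mathbb{C}C\oplus\mathbb{C}C_1$.

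Next I would pin down $\Delta'$ on the basis. Since $\Delta'(L_m)=[v,L_m]$ (the $\delta$-component vanishes on $L_m$) must be central, the coefficient-comparison reasoning used throughout Section~4 forces $v\in\mathbb{C}L_m+\mathbb{C}I_m+\mathrm{center}$, whence $[v,L_m]=0$. A parallel analysis at $I_m$: writing $\Delta'(I_m)=[v,I_m]+cI_m$ and demanding centrality forces $a_k=0$ for $k\notin\{0,m\}$ and $c=ma_0$, which yields $[v,I_m]+cI_m=0$. With $\Delta'(C)=0$ automatic, the only remaining datum is $\Delta'(C_1)=\lambda C_1$ for some unknown $\lambda\in\mathbb{C}$.

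The main obstacle --- the genuinely new step beyond Theorem~\ref{main} --- is to force $\lambda=0$, since the values of $\Delta'$ on basis elements alone do not distinguish $\lambda=0$ from $\lambda\ne 0$. My plan is to apply $\Delta'$ to the test element $x=I_m+I_{-m}+C_1$ for a fixed $m\ge 2$. By linearity $\Delta'(x)=\lambda C_1$, and the local derivation property yields some $D''=\mathrm{ad}(v)+c\delta$ with
\begin{equation*}
\lambda C_1=D''(x)=[v,I_m]+[v,I_{-m}]+c(I_m+I_{-m})+cC_1.
\end{equation*}
Demanding that the $L$- and $I$-components on the right vanish leads, via finite-support propagation of the recursion $a_{n-m}(n-2m)+a_{n+m}(n+2m)=0$ together with the boundary relations at $n=0,\pm m$ (the latter pair giving both $c=ma_0$ and $c=-ma_0$, hence $a_0=c=0$), to $a_k=0$ for $k\ne\pm m$, $a_m=a_{-m}$, and $c=0$. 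A direct calculation using $[L_{\pm m},I_{\mp m}]=\pm 2mI_0\pm\tfrac{m^3-m}{12}C_1$ then gives
\begin{equation*}
D''(x)=2m(a_m-a_{-m})I_0+\tfrac{m^3-m}{12}(a_m-a_{-m})C_1=0,
\end{equation*}
forcing $\lambda=0$. Hence $\Delta'=0$ and $\Delta=D\in\mathrm{Der}(W(2,2))$.
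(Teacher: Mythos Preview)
Your proof is correct and follows the same overall strategy as the paper: descend to the centerless quotient $\mathcal{W}$, invoke Theorem~\ref{main}, lift the resulting derivation back to $W(2,2)$, and then verify that the residual local derivation $\Delta'$ with central image vanishes on each $L_m$ and $I_m$ by writing $\Delta'(L_m)=[v,L_m]$ and $\Delta'(I_m)=[v,I_m]+cI_m$ and reading off the constraints. The one substantive addition you make concerns $C_1$: the paper asserts $\Delta(C_1)=0$ ``by definition'', but since the outer derivation satisfies $\delta(C_1)=C_1$ one only obtains $\Delta(C_1)\in\mathbb{C}C_1$ a priori; your test-element computation with $x=I_m+I_{-m}+C_1$ (the finite-support recursion forcing $a_k=0$ for $k\neq\pm m$, then the two boundary equations at $n=\pm m$ giving $c=\pm ma_0$ and hence $a_0=c=0$, and finally $a_m=a_{-m}$ from $n=0$ killing the $C_1$-contribution) is a genuine extra step that the paper glosses over, so your argument is actually more complete on this point.
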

\begin{proof}
From Corollary \ref{coro-Vir}, we just need to consider $C_1$ in the proofs of Theorem \ref{main}.
Let $\Delta$ be a local derivation of $W(2,2)$. By definition we have $\Delta(C_1)=0$. Now from Theorem \ref{main} we can suppose that $\Delta(L_0)=0$ and $\Delta(L_m)=a_mC_1$ and $\Delta(I_m)=b_mC_1$ for some $a_m, b_m\in\mathbb C$. By $a_mC_1=\Delta(L_m)=[u, L_m]$ for some $u\in W(2,2)$,  we can get $a_m=0$.  Similarly, by $b_mC_1=\Delta(I_m)=[w, I_m]+b\delta(I_m)$ for some $w\in W(2,2), b\in\mathbb C$,  we can get $b_m=0$.
\end{proof}

\section{Local derivations on the deformed $\mathfrak{bms}_3$ algebra}

The deformed $\mathfrak{bms}_3$ algebra corresponds to an infinite-dimensional lift of the (2+1)-dimensional Maxwell algebra in the very same way as the $W(2, 2)$  and $\mathfrak{bms}_3$ are infinite-dimensional lifts of the AdS and the Poinca\'re algebras in $2 + 1$ dimensions respectively (\cite{CCRS}).

\begin{definition}\cite{CCRS} The deformed $\mathfrak{bms}_3$ algebra $\widehat{\mathcal B}$ is an infinite-dimensional Lie algebra with $\mathbb{C}$-basis
\begin{eqnarray*}
\{L_{m},J_{m}, I_m, C, C_1, C_2|m\in\mathbb{Z}\}
\end{eqnarray*}
and relations
\begin{eqnarray*}
&&[L_{m},L_{n}]=(m-n)L_{m+n}+\delta_{m+n, 0}\frac1{12}(m^3-m)C,\\
&&[L_{m},J_{n}]=(m-n)J_{m+n}+\delta_{m+n, 0}\frac1{12}(m^3-m)C_1,\\
&&[L_{m},I_{n}]=(m-n)I_{m+n}+\delta_{m+n, 0}\frac1{12}(m^3-m)C_2,\\
&&[J_{m},J_{n}]=(m-n)I_{m+n}+\delta_{m+n, 0}\frac1{12}(m^3-m)C_2,
\end{eqnarray*} for any $m,n\in\mathbb{Z}$, and the others are zero's.
\end{definition}

Clearly the subalgebra span$_{\mathbb C}\{L_m, I_m, C, C_2\mid m\in\mathbb Z\}$ is isomorphic to $W(2,2)$. Moreover, $\widehat{\mathcal B}$ is also a truncated loop algebra of Vir. In fact, $\widehat{\mathcal B}$ is isomorphic to the truncated loop algebra ${\rm Vir}\otimes\mathbb C[t, t^{-1}]/(t^3)$.

\begin{lemma}\label{lemma_2}
The derivation algebra of $\widehat{\mathcal B}$ is
\begin{eqnarray*}
\mathrm{Der}(\widehat{\mathcal B})=\mathrm{Inn}(\widehat{\mathcal B})\bigoplus\mathbb{C}\delta,
\end{eqnarray*}
where $\delta$ is an outer derivation defined by $\delta(L_{m})=\delta(C)=0,\delta(J_{m})=\frac12J_{m}, \delta(I_{m})=I_{m}$ and $\delta(C_i)=C_i, i=1,2$, for any $m\in\mathbb{Z}$.
\end{lemma}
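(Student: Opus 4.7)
The plan is to mimic, one level higher in the truncated loop, the proof of Lemma \ref{der-w22} that determines $\mathrm{Der}(W(2,2))$: since $\widehat{\mathcal B}\cong\mathrm{Vir}\otimes\mathbb{C}[t,t^{-1}]/(t^3)$, the outer direction $\delta$ is morally a multiple of $\mathrm{id}\otimes t\partial_t$, and any other outer direction must respect the same $L_0$-grading. The first step is to verify directly that $\delta$ is a derivation by applying the Leibniz rule to each of the four families of defining brackets; the noncentral parts are immediate, and the values of $\delta$ on $C,C_1,C_2$ are forced by this verification. This yields $\mathrm{Inn}(\widehat{\mathcal B})+\mathbb{C}\delta\subseteq\mathrm{Der}(\widehat{\mathcal B})$.

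For the converse, fix $D\in\mathrm{Der}(\widehat{\mathcal B})$ and decompose it into $\mathrm{ad}(L_0)$-eigencomponents $D=\sum_k D_k$, where $\widehat{\mathcal B}_k=\mathbb{C}L_k\oplus\mathbb{C}J_k\oplus\mathbb{C}I_k$ for $k\ne 0$ and $\widehat{\mathcal B}_0$ additionally contains $C,C_1,C_2$. Each $D_k$ is itself a derivation. For $k\ne 0$ the image $D_k(C_i)$ must be simultaneously of degree $k$ and central, so it vanishes; from $D_k([L_0,x])=-\deg(x)D_k(x)$ one then extracts $[D_k(L_0),x]=k D_k(x)$ for every $x\in\widehat{\mathcal B}$, so $D_k=\mathrm{ad}(k^{-1}D_k(L_0))\in\mathrm{Inn}(\widehat{\mathcal B})$. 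The problem thus reduces to analyzing the degree-zero piece $D_0$.

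For $D_0$ I would proceed in three stages. Stage one: the composition of $D_0|_{\mathrm{Vir}}$ with the projection $\widehat{\mathcal B}\to\mathrm{Vir}$ onto the $L$-row is a degree-zero derivation of Vir, hence of the form $\mathrm{ad}(\alpha L_0)$; after subtracting this, $D_0(L_m)\in\mathbb{C}J_m\oplus\mathbb{C}I_m$ for $m\ne 0$, and applying $D_0$ to $[L_m,L_n]$ forces $D_0(L_m)=\mu m\,J_m+\nu m\,I_m$, which is absorbed by $\mathrm{ad}(-\mu J_0-\nu I_0)$. Stage two: writing $D_0(J_m)=a_mL_m+b_mJ_m+c_mI_m$ and $D_0(I_m)=a'_mL_m+b'_mJ_m+c'_mI_m$, the relation $[L_m,J_n]$ forces the coefficients $a_m,b_m,c_m,a'_m,b'_m,c'_m$ to be independent of $m$, and the additional bracket $[J_m,J_n]=(m-n)I_{m+n}+\cdots$ then links the two rows via $a'=0$, $b'=2a$, $c'=2b$. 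Subtracting $2b\,\delta$ eliminates the $b'$-term. Stage three: apply $D_0$ to $[J_m,L_0]=-mJ_m$; using that $D_0(L_0)$ is central, this gives $-m(aL_m+cI_m)=m(aL_m+cI_m)$, whence $a=c=0$. The remaining unknowns are the central values $D_0(L_0),D_0(J_0),D_0(I_0),D_0(C),D_0(C_1),D_0(C_2)$, and the three cocycle identities $[L_m,L_{-m}]$, $[L_m,J_{-m}]$, $[L_m,I_{-m}]$ each produce an equation of the shape $2mX+\tfrac{1}{12}(m^3-m)Y=0$ with $X,Y$ central; matching the $m$ and $m^3$ coefficients forces $X=Y=0$.

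The main technical burden will be the bookkeeping in Stages two and three, where nine noncentral scalar unknowns together with six central values are eliminated by a cascade of four cocycle-type identities. The decisive new ingredient compared with the $W(2,2)$ case is the bracket $[J_m,J_n]\to I$: it couples the $J$-row of coefficients to the $I$-row and is precisely what keeps the outer-derivation space one-dimensional rather than two-dimensional. Once the cascade terminates we obtain $D-c\,\delta\in\mathrm{Inn}(\widehat{\mathcal B})$ for some $c\in\mathbb{C}$; the decomposition is direct because $\delta\notin\mathrm{Inn}(\widehat{\mathcal B})$, as no $y\in\widehat{\mathcal B}$ can satisfy $[y,I_m]=I_m$ for every $m$ (the $I_m$-coefficient of $[y,I_m]$ depends only on the $L_0$-coefficient of $y$, which is $m$-independent).
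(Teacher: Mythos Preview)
Your Stage three contains a sign error that is fatal to the argument. You write $[J_m,L_0]=-mJ_m$, but $[L_0,J_m]=-mJ_m$ and hence $[J_m,L_0]=mJ_m$. With the correct sign, applying $D_0$ gives $m(aL_m+cI_m)$ on both sides, a tautology, so nothing is learned about $a$ or $c$. (Incidentally, subtracting $2b\,\delta$ kills the $c'=2b$ term, not the $b'=2a$ term; that slip is harmless but should be corrected.) One can still force $a=0$ by applying $D_0$ to $[J_m,I_n]=0$, which yields $3a(m-n)I_{m+n}=0$ modulo the center.

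The parameter $c$, however, cannot be eliminated: the map $\delta'$ with $\delta'(L_m)=\delta'(I_m)=\delta'(C)=\delta'(C_2)=0$, $\delta'(J_m)=I_m$, $\delta'(C_1)=C_2$ satisfies the Leibniz rule for every defining bracket of $\widehat{\mathcal B}$ and is not inner (any $y$ with $[y,L_m]=0$ for all $m$ must be central). Under the identification $\widehat{\mathcal B}\cong\mathrm{Vir}\otimes\mathbb{C}[t]/(t^3)$ this is $\mathrm{id}\otimes t^{2}\partial_t$; since $\mathrm{Der}\bigl(\mathbb{C}[t]/(t^3)\bigr)$ is spanned by $t\partial_t$ and $t^{2}\partial_t$, the space of outer derivations is two-dimensional, not one. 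Thus the statement as written is false, and neither your argument nor the paper's one-line proof (``follows by Lemma~\ref{der-w22} and some easy calculations'') can be completed without amending the conclusion to $\mathrm{Der}(\widehat{\mathcal B})=\mathrm{Inn}(\widehat{\mathcal B})\oplus\mathbb{C}\delta\oplus\mathbb{C}\delta'$.
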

\begin{proof}
It follows by Lemma \ref{der-w22} and some easy calculations.
\end{proof}

Denoted $\mathcal B$ by the centerless $\widehat{\mathcal B}$, i.e.  $\mathcal B:=\widehat{\mathcal B}/(\mathbb CC+\mathbb CC_1+\mathbb CC_2)$.

The following result can be obtained by the essentially same considerations as Sections 3, 4.

\begin{proposition}\label{prop-w22}
Let $\Delta$ be a local derivation on ${\mathcal B}$. Then there exists $D\in {\rm Der}\,{\mathcal B}$ such that $\Delta(L_m)=D(L_m)$ and $\Delta(I_m)=D(I_m)$ for any $m\in\mathbb Z$.
\end{proposition}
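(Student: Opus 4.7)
The plan is to extend the arguments of Sections 3 and 4 to $\mathcal{B}$, accommodating the extra basis $\{J_m\}$ and the modified outer derivation $\delta$ from Lemma \ref{lemma_2}. First, I would generalize Lemmas \ref{lemma3-1}, \ref{lemma-Lm}, \ref{lemma-Lm0} and Theorem \ref{lemma-vir} to $\mathcal{B}$. For a local derivation $\Delta$ with $\Delta(L_0)=0$, expand $\Delta(L_m)=\sum_n(a_nL_n+b_nJ_n+c_nI_n)$ and play off $\Delta(L_m+xL_0)$ as in the proof of Lemma \ref{lemma3-1}. The bracket
\[
\left[\sum_n(\alpha_nL_n+\beta_nJ_n+\gamma_nI_n),\,L_m+xL_0\right]
\]
splits into completely independent $L$-, $J$-, and $I$-components with no cross terms, so the linear-algebra reductions of Lemmas \ref{lemma3-1} and \ref{lemma-Lm} run in each component separately and force $\Delta(L_m)=c_mL_m+d_mJ_m+e_mI_m$. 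Subtracting first an inner derivation $\mathrm{ad}(u)$ to kill $\Delta(L_0)$ and then $c_1\mathrm{ad}(L_0)+d_1\mathrm{ad}(J_0)+e_1\mathrm{ad}(I_0)$ to kill $\Delta(L_1)$, followed by the Lemma \ref{lemma-Lm0} analog applied to $L_m+L_1$, yields the $\mathcal{B}$-version of Theorem \ref{lemma-vir}: a derivation $D_0\in\mathrm{Der}\,\mathcal{B}$ such that $(\Delta-D_0)(L_m)=0$ for every $m$.

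Next, I introduce the key construction $L'_m:=L_m+mI_m$. Direct computation yields $[L'_m,L'_n]=(m-n)L'_{m+n}$, $[L'_m,J_n]=(m-n)J_{m+n}$, $[L'_m,I_n]=(m-n)I_{m+n}$, and $[J_m,J_n]=(m-n)I_{m+n}$, so $\{L'_m,J_m,I_m\}$ satisfies the defining relations of $\mathcal{B}$ and in fact spans all of $\mathcal{B}$ in a new basis. After the previous normalization $\Delta(L_m)=0$ we also have $\Delta(L'_0)=0$, so the $\mathcal{B}$-analog of Lemma \ref{lemma-Lm} applied in the $L'_m$ basis gives
\[
\Delta(L'_m)=c'_mL'_m+d'_mJ_m+e'_mI_m.
\]
On the other hand $\Delta(L'_m)=m\Delta(I_m)\in\bigoplus_k\mathbb{C}I_k$ by Lemma \ref{lemma_2}, so the $L$- and $J$-components must vanish: $c'_m=d'_m=0$. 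Thus $\Delta(I_m)\in\mathbb{C}I_m$ for every $m\neq 0$. The case $m=0$ follows from the test element $I_0+L_p+L_q$ used in Case 2 of Lemma \ref{lem-Im}; the extra $J$-components of the candidate $v$ are bounded by the same argument, since $[J_k,I_0]=0$ and the $J$-contributions of $[J_k,L_p+L_q]$ satisfy exactly the same support restrictions as the $L$- and $I$-contributions.

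Finally, apply the $\mathcal{B}$-analog of Theorem \ref{lemma-vir} in the $L'_m$ basis to produce $D_1\in\mathrm{Der}\,\mathcal{B}$ with $\Delta(L'_m)=D_1(L'_m)$. Writing $D_1=\mathrm{ad}(u)+b\delta$, the identity
\[
m\Delta(I_m)=D_1(L_m)+mD_1(I_m)\in\mathbb{C}I_m\quad(m\neq 0)
\]
forces the $L$- and $J$-coefficients of $[u,L_m]$ to vanish for every $m$, which pins down $u=aI_0$ and gives $\Delta(I_m)=(b-a)I_m$ for $m\neq 0$. Subtracting $(b-a)\delta$ reduces to $\Delta(I_m)=0$ for $m\neq 0$, and the $I_0+I_1+I_2$ trick from the proof of Theorem \ref{main} kills $\Delta(I_0)$; this trick transfers verbatim to $\mathcal{B}$ because $[J_n,I_k]=0$, so the $J$-part of the candidate $w$ contributes nothing. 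Unwinding all subtractions exhibits the required $D\in\mathrm{Der}\,\mathcal{B}$. I expect the principal obstacle to be the bookkeeping in the first paragraph, i.e., redoing the chain-of-equations analysis of Lemma \ref{lemma3-1} uniformly in the three components $L, J, I$; the saving grace is that $\delta(L_m)=0$, so $\delta$ never intervenes in the normalization steps on $\Delta(L_\ast)$ and the $J$-block behaves formally like a second copy of the $L$-block throughout.
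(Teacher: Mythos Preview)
Your proposal is correct and follows essentially the same approach as the paper, whose proof is the single sentence that the result ``can be obtained by the essentially same considerations as Sections 3, 4.'' You have spelled those considerations out in detail, correctly noting that the $J$-block behaves as an independent copy of the $L$-block throughout the Section~3 reductions (since $\delta(L_m)=0$ and the brackets $[L_n,L_m]$, $[J_n,L_m]$, $[I_n,L_m]$ are block-diagonal), and that the key construction $L'_m=L_m+mI_m$ of Lemma~\ref{key-construction1} carries over to $\mathcal{B}$ verbatim because $[J_k,I_m]=0$.
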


Now we use a key construction to determine $\Delta(J_m) (\in {\rm span}\{I_k, J_k\mid k\in\mathbb Z\})$ and then to determine all local derivations on $\mathcal B$ as in Section 4.

Set $L''_{m}=L_{m}+\sqrt2mJ_{m}+m^2I_m$ and $J_m'=J_m+\sqrt2mI_m$, we can easily see that  $[L''_{m},L''_{n}]=(m-n)L''_{m+n}$ and $[L''_{m}, J'_{n}]=(m-n)J'_{m+n}$.
So we have the following result.

\begin{lemma}\label{key-construction2}
The subalgebra ${\rm span}_{\mathbb C}\{L_m'', J_m', I_m\mid m\in\mathbb Z\}$ of $\mathcal B$ is isomorphic to $\mathcal B$.
\end{lemma}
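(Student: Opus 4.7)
The plan is to exhibit an explicit Lie algebra isomorphism $\varphi: \mathcal B \to \mathrm{span}_{\mathbb C}\{L_m'', J_m', I_m \mid m \in \mathbb Z\}$ defined on the generators by
\[
\varphi(L_m) = L_m'', \qquad \varphi(J_m) = J_m', \qquad \varphi(I_m) = I_m,
\]
and extended linearly. Once this map is shown to preserve all brackets and to be a bijection onto its image, the subalgebra is automatically isomorphic to $\mathcal B$.

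First I would verify linear independence of $\{L_m'', J_m', I_m\}_{m \in \mathbb Z}$ in $\mathcal B$. Writing each element in the original basis shows the coordinate change from $\{L_m, J_m, I_m\}$ to $\{L_m'', J_m', I_m\}$ is block-upper-triangular (with $1$'s on the diagonal and corrections only in the ``lower-weight'' directions $J \mapsto I$ and $L \mapsto J, I$), so the new set is a basis of the same span. In particular $\varphi$ is injective.

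Next I would check all six commutator types. The two identities $[L_m'', L_n''] = (m-n)L_{m+n}''$ and $[L_m'', J_n'] = (m-n) J_{m+n}'$ are already recorded in the text (the coefficients $\sqrt 2\, m$ and $m^2$ in $L_m''$ are precisely what make the $J$-part and $I$-part of $[L_m'', L_n'']$ match $(m-n)L_{m+n}''$; similarly $\sqrt 2\, m$ in $J_m'$ is forced by the matching on the $I$-part). It remains to verify
\[
[L_m'', I_n] = (m-n) I_{m+n}, \qquad [J_m', J_n'] = (m-n) I_{m+n}, \qquad [J_m', I_n] = 0, \qquad [I_m, I_n] = 0.
\]
Each is a short direct calculation using the defining relations of $\mathcal B$ and the fact that $I_m$ is central with respect to $J_\bullet$ and $I_\bullet$; for example,
\[
[J_m', J_n'] = [J_m, J_n] + \sqrt 2\, n [J_m, I_n] + \sqrt 2\, m [I_m, J_n] + 2mn [I_m, I_n] = (m-n) I_{m+n}.
\]

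These computations show $\varphi$ is a Lie algebra homomorphism agreeing on generators with a bijection of bases, so it is an isomorphism onto the stated subalgebra. There is no real obstacle here beyond bookkeeping; the only conceptual point is that the coefficients $\sqrt 2\, m$, $m^2$ in the definitions of $L_m''$ and $J_m'$ are not ad hoc but are determined by requiring the cross-terms involving $[L_\bullet, J_\bullet]$, $[L_\bullet, I_\bullet]$, and $[J_\bullet, J_\bullet]$ to reassemble into the correct structure constants $(m-n)$ on the new basis.
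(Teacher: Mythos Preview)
Your proof is correct and takes essentially the same approach as the paper: the paper states the two key identities $[L_m'',L_n'']=(m-n)L_{m+n}''$ and $[L_m'',J_n']=(m-n)J_{m+n}'$ immediately before the lemma and then gives no further argument, so you are simply filling in the remaining bracket checks and the bijectivity (the change of basis being unitriangular) that the paper leaves implicit.
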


\begin{lemma}\label{lem-w222-Im}
 Let $\Delta$ be a local derivation on ${\mathcal  B}$ such that $\Delta(L_m)=\Delta(I_m)=0$ for any $m\in\mathbb Z$. Then $\Delta(J_m)\in\mathbb C J_m+\mathbb C I_m$.
\end{lemma}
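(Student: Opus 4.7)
The plan is to follow the blueprint of Lemma \ref{lem-Im}, splitting into the cases $m\ne 0$ and $m=0$, and to exploit the isomorphism of Lemma \ref{key-construction2}. From the definition of local derivation together with Lemma \ref{lemma_2}, for each $m\in\mathbb Z$ there exist $u_m\in\mathcal B$ and $b_m\in\mathbb C$ with $\Delta(J_m)=[u_m,J_m]+\frac{b_m}{2}J_m$. Since $[L_k,J_m]\in\mathbb C J_{k+m}$, $[J_k,J_m]\in\mathbb C I_{k+m}$ and $[I_k,J_m]=0$, this gives $\Delta(J_m)\in\bigoplus_{k\in\mathbb Z}(\mathbb C J_k\oplus\mathbb C I_k)$, so it suffices to rule out the components indexed by $k\ne m$.

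For $m\ne 0$, I would use Lemma \ref{key-construction2}: ${\rm span}_{\mathbb C}\{L''_k,J'_k,I_k\mid k\in\mathbb Z\}$ is an isomorphic copy of $\mathcal B$ inside $\mathcal B$. Running the proofs of Proposition \ref{prop-w22} and the analogues for $\mathcal B$ of Lemmas \ref{lemma3-1} and \ref{lemma-Lm} verbatim in this copy (which is legitimate because $L''_0=L_0$, so $\Delta(L''_0)=0$) should yield $\Delta(L''_m)=c_mL''_m+d_mJ'_m+e_mI_m$ for some $c_m,d_m,e_m\in\mathbb C$. Expanding $L''_m=L_m+\sqrt{2}\,mJ_m+m^2I_m$ and $J'_m=J_m+\sqrt{2}\,mI_m$, and using $\Delta(L_m)=\Delta(I_m)=0$, gives
\begin{eqnarray*}
\sqrt{2}\,m\,\Delta(J_m)=c_mL_m+(\sqrt{2}\,mc_m+d_m)J_m+(m^2c_m+\sqrt{2}\,md_m+e_m)I_m.
\end{eqnarray*}
Since the left-hand side has no $L$-component, $c_m=0$, and hence $\Delta(J_m)\in\mathbb C J_m+\mathbb C I_m$.

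For $m=0$, I would repeat the device of Case 2 of Lemma \ref{lem-Im}. Write $\Delta(J_0)=\sum_{i=s}^{t}a_iJ_i+\sum_{i=s'}^{t'}b_iI_i$ with extreme coefficients nonzero, and choose $p,q\in\mathbb Z$ with $p<\min(s,s',0)$, $q>\max(t,t',0)$ and $p+q>\max(t,t')$. Apply $\Delta$ to $J_0+L_p+L_q$: since $\Delta(L_p)=\Delta(L_q)=0$, the local-derivation identity reads $\Delta(J_0)=[u,J_0]+[u,L_p]+[u,L_q]+\frac{b}{2}J_0$ for some $u=\sum_k(\alpha_kL_k+\beta_kJ_k+\gamma_kI_k)\in\mathcal B$ and $b\in\mathbb C$. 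Matching $L$-coefficients first forces the $L$-part of $u$ to be $b'(L_p+L_q)$, exactly as in the proof of Lemma \ref{lem-Im}; matching $J$-coefficients (to which only the $J$-part of $u$ contributes) then restricts $\{k:\beta_k\ne 0\}\subseteq\{p,q,0\}$; and finally matching $I$-coefficients pins down $\gamma$ and forces $a_i=0$ and $b_i=0$ for $i\ne 0$, giving $\Delta(J_0)\in\mathbb C J_0+\mathbb C I_0$.

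The main obstacle is this last coefficient match: because $[J_k,J_0]=kI_k$ couples the $J$-part of $u$ into the $I$-coefficient equations, the extremal-index eliminations for $\beta$ and $\gamma$ cannot be run independently but must be interleaved. The gap conditions $p\ll 0\ll q$ and $p+q>\max(t,t')$ are chosen precisely so that enough separation of indices remains for these arguments to go through.
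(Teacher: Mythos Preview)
Your proof is correct and follows essentially the same approach as the paper: Case~1 via the $L''$-copy of Lemma~\ref{key-construction2} and Case~2 via the test element $J_0+L_p+L_q$ exactly as in Lemma~\ref{lem-Im}. One small slip in your Case~2 sketch: the $L$-part $b'(L_p+L_q)$ of $u$ \emph{does} contribute to the $J$-coefficients through $[L_p,J_0]=pJ_p$ and $[L_q,J_0]=qJ_q$, but since these land only at the extreme indices $p$ and $q$ they do not interfere with the extremal-index argument restricting the support of $\beta$ to $\{p,q,0\}$, so the conclusion stands.
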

\begin{proof}
\noindent{\bf Case 1}: $m\neq0$.
Using the same considerations as Lemma \ref{lemma-Lm}, we can get that $\Delta(L''_{m})=a_{m}L''_{m}+b_{m}J_{m}'+c_m I_m$.
So \begin{eqnarray*}
&&\Delta(L_{m})+\sqrt2m\Delta(J_{m})+m^2\Delta(I_m)\\
=&&a_{m}(L_{m}+\sqrt2mJ_{m}+m^2I_m)+b_{m}(J_{m}+\sqrt2mI_m)+c_mI_m.
\end{eqnarray*}
Therefore we have $\Delta(J_m)\in\mathbb C J_m+\mathbb C I_m$.

\noindent{\bf Case 2}: $m=0$. It is essentially same as that of Lemma \ref{lem-Im} in Section 4.
\end{proof}

\begin{lemma}\label{lem-Jm}
 Let $\Delta$ be a local derivation on ${\mathcal B}$ such that $\Delta(L_m)=\Delta(I_m)=0$ for any $m\in\mathbb Z$. Then $\Delta(J_m)=0$ for any $m\in\mathbb Z$.
\end{lemma}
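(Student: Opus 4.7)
The plan is to first reduce the shape of $\Delta(J_m)$ using Lemma~\ref{lem-w222-Im}, then exploit the key construction of Lemma~\ref{key-construction2} together with Proposition~\ref{prop-w22} to pin down the $m$-dependence of the remaining coefficients, then eliminate the last two scalars with two carefully chosen test elements, and finally recover the $m=0$ case by the same recurrence trick used in the proof of Theorem~\ref{main}.

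First I would write $\Delta(J_m)=\alpha_mJ_m+\beta_mI_m$ by Lemma~\ref{lem-w222-Im}, and then invoke the automorphism $\phi:\mathcal B\to\mathcal B$ of Lemma~\ref{key-construction2} sending $L_m\mapsto L''_m$, $J_m\mapsto J'_m$, $I_m\mapsto I_m$. Since conjugation by an automorphism preserves derivations, $\phi^{-1}\Delta\phi$ is still a local derivation, so Proposition~\ref{prop-w22} produces $D\in\mathrm{Der}(\mathcal B)$ with $\Delta(L''_m)=D(L''_m)$ and $\Delta(I_m)=D(I_m)$ for every $m$. Writing $D=\mathrm{ad}(u)+b\delta$ with $u=\sum a_kL_k+\sum b_kJ_k+\sum c_kI_k$, the identity $D(I_m)=0$ (compared across all $m$) forces $a_k=0$ for every $k$ and $b=0$. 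Substituting into
\begin{equation*}
D(L''_m)=\sum b_k(k-m)J_{k+m}+\sum(c_k+\sqrt2\,m\,b_k)(k-m)I_{k+m}
\end{equation*}
and matching against $\Delta(L''_m)=\sqrt2\,m(\alpha_mJ_m+\beta_mI_m)$, the $J$-coefficient analysis (again varying $m$) forces $b_k=0$ for $k\ne0$ and $\alpha_m=-b_0/\sqrt2$ independent of $m\ne0$; the $I$-coefficient analysis forces $c_k=0$ for $k\ne0$ and $\beta_m=-c_0/\sqrt2+\sqrt2\,m\alpha$. Setting $\alpha:=-b_0/\sqrt2$ and $\gamma:=-c_0/\sqrt2$, one has $\Delta(J_m)=\alpha J_m+(\gamma+\sqrt2\,m\alpha)I_m$ for all $m\ne0$.

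Next I would kill $\alpha$ and $\gamma$ by two local-derivation tests. Applied to $L_1+J_2$: the $L$-part forces $a'_k=0$ for $k\ne1$; the $J$-part forces $b'=2\alpha$ and $b'_k=0$ outside $\{1,2\}$, in particular $b'_0=0$; then the $I_2$-coefficient on the derivation side equals $-2b'_0=0$, while the $\Delta$-side reads $\gamma+2\sqrt2\,\alpha$, giving $\gamma+2\sqrt2\,\alpha=0$. Applied to $J_1+J_2$: the recurrence $ja_j+(j-2)a_{j-1}+(j-4)a_{j-2}=0$ for $j\ne1,2$, together with the finite support of the associated $u''$, forces (exactly as in the $\Delta(I_0+I_1+I_2)$ step of Theorem~\ref{main}) that the $L$-coefficients $a''_k$ vanish outside $\{0,1,2\}$ with $a''_0=0$ and $a''_1=a''_2$, and $b''=2\alpha$; the same recurrence repeats on the $I$-side with $b''_k$ in place of $a''_k$, and comparing the $I_1$- and $I_2$-equations produces $\gamma=0$. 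Combined with $\gamma+2\sqrt2\,\alpha=0$ this gives $\alpha=0$, so $\Delta(J_m)=0$ for every $m\ne0$.

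For the final $m=0$ case I would apply the local derivation to $J_0+J_1+J_2$: since $\Delta(J_1)=\Delta(J_2)=0$, the target is $\alpha_0J_0+\beta_0I_0$, and the very same recurrence analysis as in the $\Delta(I_0+I_1+I_2)$ step of Theorem~\ref{main} forces $\alpha_0=0$ on the $J$-side and $\beta_0=0$ on the $I$-side. The hard part is the bookkeeping in the second paragraph: the single derivation $D$ produced by Proposition~\ref{prop-w22} has to serve all $m$ simultaneously, and it is exactly this global $m$-independence of $D$ that collapses the infinite family of $m$-indexed coefficient equations to the two scalar parameters $\alpha$ and $\gamma$ that the last two tests then kill.
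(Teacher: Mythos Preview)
Your approach is correct but genuinely different from the paper's. The paper never uses Lemma~\ref{key-construction2} or Proposition~\ref{prop-w22} inside this proof: it works one $m$ at a time with two direct tests, $J_m+L_1+L_2$ (which forces the $I_m$-coefficient $b_m=0$) and $J_m+I_{2m}$ (which forces the $J_m$-coefficient $a_m=0$, via the relation $c'-2mf_0'=0$ coming from the $\delta$-contribution to $I_{2m}$ versus the $\mathrm{ad}(L_0)$-contribution). This is shorter and handles $m=0$ in the same breath. Your route is more structural: you push the automorphism of Lemma~\ref{key-construction2} through Proposition~\ref{prop-w22} to obtain a \emph{single} derivation $D$ matching $\Delta$ on all $L''_m$ and $I_m$ simultaneously, which collapses the infinite family $\{\alpha_m,\beta_m\}_{m\ne0}$ to two scalars $\alpha,\gamma$; two global tests then finish. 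That global reduction is elegant and closer in spirit to how the paper handles the $W(2,2)$ case, though it costs more bookkeeping here than the paper's two-line tests.

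One small slip: for the test element $J_1+J_2$ the $J$-part recurrence is $(j-2)a''_{j-1}+(j-4)a''_{j-2}=0$, without the leading $ja_j$ term you wrote (that term only appears once $J_0$ is included, i.e.\ in your $m=0$ step). Fortunately your stated conclusions ($a''_k=0$ outside $\{0,1,2\}$, $a''_0=0$, $a''_1=a''_2$, $b''=2\alpha$) are exactly what the correct recurrence together with the $n=1,2$ equations gives, and the $I$-side then yields $\gamma=0$ as you claim. So the argument stands; just fix the displayed recurrence.
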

\begin{proof}

For any $m\in\mathbb Z$,
 $\Delta(J_m)=a_mJ_m+b_mI_m$ for some $a_m, b_m\in\mathbb C$  by Lemma \ref{lem-w222-Im}.

For $J_m+L_1+L_2$, there exist $\sum_{i\in I}a_i'L_i+\sum_{j\in J}b_j'J_j+\sum_{k\in K}c_k'I_k\in {\mathcal B}, b'\in\mathbb C$, where $a_i', b_j', c_k'\in\mathbb C$ for any $i\in I, j\in J, k\in K$, such that
\begin{eqnarray*}&&a_mJ_m+b_mI_m=\Delta(J_m)=\Delta(J_m+L_1+L_2)\\
&=&[\sum_{i\in I}a_i'L_i+\sum_{j\in J}b_j'J_j+\sum_{k\in K}c_k'I_k, J_m+L_1+L_2]+b'\delta(J_m+L_1+L_2).
\end{eqnarray*}
 It is clear that
$\sum_{i\in I}a_i'L_i=a'(L_1+L_2)$,  and then $J\subset\{m\}$ and $K=\emptyset$. By easily calculation we get $b_m=0$.

For $J_m+I_{2m}$, there exist $\sum_{i\in I'}f_i'L_i+\sum_{j\in J'}d_j'J_j\in\mathcal B, c'\in\mathbb C$, where $f_i', d_j'\in\mathbb C$ for any $i\in I', j\in J'$,  such that
$a_mJ_m=\Delta(J_m)=\Delta(J_m+I_{2m})=[\sum_{i\in I'}f_i'L_i+\sum_{j\in J'}d_j'J_j, J_m+I_{2m}]+c'\delta(J_m+I_{2m})$. It can easily get $I'\subset\{0, m\}, J'\subset\{m, 2m\}$.
So \begin{eqnarray*}&&a_mJ_m=\Delta(J_m+I_{2m})\\
&=&[f_0'L_0+f_m'L_m+d_m'J_m+d_{2m}'J_{2m}, J_m+I_{2m}]+c'\delta(J_m+I_{2m})\\
&=&-mf_0'J_m-2mf_0'I_{2m}-mf_m'I_{3m}+md_{2m}'I_{3m}+\frac12c'J_m+c'I_{2m}\\
&=&\frac12(c'-2mf_0')J_m+(c'-2mf_0')I_{2m}+m(d_{2m}'-f_m')I_{3m}.
\end{eqnarray*}
So we have $c'-2mf_0'=0$ and then $a_m=0$.
The lemma follows.
\end{proof}

Now we are in position to get the main result of this section.

\begin{theorem}\label{main2}
Every local derivation
on ${\mathcal B}$ is a derivation.
\end{theorem}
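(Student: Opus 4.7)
The plan is to mimic the three-stage reduction used in the proof of Theorem \ref{main}, now with $I_m$ playing the role that $L_m$ played there and $J_m$ playing the role of $I_m$. Let $\Delta$ be an arbitrary local derivation on $\mathcal B$. First I would invoke Proposition \ref{prop-w22} to produce a single derivation $D \in \mathrm{Der}(\mathcal B)$ that agrees with $\Delta$ on every $L_m$ and every $I_m$. Since $D$ is an honest derivation, the map $\Delta_1 := \Delta - D$ is still a local derivation (at each point $x$, if $\Delta(x) = D_x(x)$ then $\Delta_1(x) = (D_x - D)(x)$ and $D_x - D$ is a derivation). By construction,
\begin{equation*}
\Delta_1(L_m) = 0, \qquad \Delta_1(I_m) = 0, \qquad \forall m \in \mathbb Z.
\end{equation*}

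Next I would feed $\Delta_1$ into Lemma \ref{lem-Jm}, whose hypotheses are precisely the two vanishing conditions just established. The conclusion is $\Delta_1(J_m) = 0$ for every $m \in \mathbb Z$. Since $\{L_m, J_m, I_m \mid m \in \mathbb Z\}$ is a $\mathbb C$-basis of $\mathcal B$, this forces $\Delta_1 \equiv 0$, hence $\Delta = D \in \mathrm{Der}(\mathcal B)$, as desired.

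The genuine work is already absorbed into the lemmas stated above, so at the level of Theorem \ref{main2} the argument is essentially assembly. The only points that require a sanity check are (i) the trivial but necessary fact that the difference of a local derivation and a derivation is a local derivation, which is immediate from the definition, and (ii) that the derivation $D$ supplied by Proposition \ref{prop-w22} is a single global object (not a $\Delta$-dependent family), which is exactly what Proposition \ref{prop-w22} asserts. The main conceptual obstacle, namely extracting a global derivation from the local data on the $L_m$-$I_m$ part and then isolating the $J_m$ contribution, has been handled in Proposition \ref{prop-w22} and Lemma \ref{lem-Jm} respectively via the key construction in Lemma \ref{key-construction2}.

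As an aside, the analogue of Corollary \ref{coro-w22} for $\widehat{\mathcal B}$ will follow by the same trick used there: every derivation of $\widehat{\mathcal B}$ annihilates the central elements $C, C_1, C_2$, so after reducing via Theorem \ref{main2} one is left only with values in $\mathbb C C + \mathbb C C_1 + \mathbb C C_2$, and these are killed by testing $\Delta$ against an inner derivation $\mathrm{ad}\,u$ and the outer derivation $\delta$ via the local derivation property.
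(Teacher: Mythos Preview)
Your proposal is correct and follows essentially the same approach as the paper: invoke Proposition \ref{prop-w22} to subtract off a derivation matching $\Delta$ on all $L_m$ and $I_m$, then apply Lemma \ref{lem-Jm} to kill the remaining $J_m$-part. The paper's proof is even terser than yours, but the logical skeleton is identical.
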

\begin{proof}
Let $\Delta$ be a local derivation on ${\mathcal B}$.  By Proposition \ref{prop-w22}, there exists $D\in {\rm Der}\, \mathcal B$ such that $\Delta(L_m)=D(L_m)$ and $\Delta(I_m)=D(I_m)$ for any $m\in\mathbb Z$.

Replaced $\Delta$ by $\Delta-D$, it follows
\begin{eqnarray*}
\Delta(L_{m})=\Delta(I_m)=0,\forall m\in\mathbb{Z}.
\end{eqnarray*}
Then the theorem follows from Lemma \ref{lem-Jm}.
\end{proof}

\begin{corollary}\label{ttm-tang}
Every local derivation
on the deformed $\mathfrak{bms}_3$ algebra $\widehat{\mathcal B}$  is a derivation.
\end{corollary}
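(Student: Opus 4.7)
The plan is to adapt the proof of Corollary \ref{coro-w22} to $\widehat{\mathcal B}$, using Theorem \ref{main2} in place of Theorem \ref{main}. By Lemma \ref{lemma_2} every derivation of $\widehat{\mathcal B}$ has the form $\mathrm{ad}(u)+a\delta$, and each such map annihilates $C$ and preserves the center $\mathbb CC\oplus\mathbb CC_1\oplus\mathbb CC_2$. Thus, for any local derivation $\Delta$ on $\widehat{\mathcal B}$, one has $\Delta(C)=0$ and $\Delta$ sends the center into itself, so $\Delta$ descends to a local derivation $\bar\Delta$ on $\mathcal B:=\widehat{\mathcal B}/(\mathbb CC\oplus\mathbb CC_1\oplus\mathbb CC_2)$. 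By Theorem \ref{main2}, $\bar\Delta$ agrees with some $\bar D\in\mathrm{Der}(\mathcal B)$; I would lift $\bar D$ to a derivation $D=\mathrm{ad}(u)+a\delta$ of $\widehat{\mathcal B}$ and replace $\Delta$ by $\Delta':=\Delta-D$, which is a local derivation whose image lies entirely in the center.

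It then suffices to show $\Delta'=0$. For each basis element $x\in\{L_m,J_m,I_m\}$ the local derivation hypothesis gives $v\in\widehat{\mathcal B}$ and $e\in\mathbb C$ with $\Delta'(x)=[v,x]+e\delta(x)$; since the left-hand side is central, matching the non-central coefficients with $0$ produces a short linear system in the coefficients of $v$ (and in $e$ when $\delta(x)\neq 0$) that pins down $v$ and $e$ so tightly that the resulting central components also vanish, exactly as in the coefficient-chase at the end of Corollary \ref{coro-w22}. This yields $\Delta'(L_m)=\Delta'(J_m)=\Delta'(I_m)=0$ for every $m$ and $\Delta'(C)=0$, leaving only $\Delta'(C_1)=\lambda C_1$ and $\Delta'(C_2)=\mu C_2$ for some $\lambda,\mu\in\mathbb C$.

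The main obstacle is then to force $\lambda=\mu=0$. The natural test elements $J_m+C_1$ or $I_m+C_1$ with $m\neq 0$ turn out to be inadequate, because the $\tfrac{e}{2}J_m$ or $eI_m$ term contributed by $\delta$ can be absorbed by choosing a suitable $L_0$-coefficient in $v$, leaving $\lambda$ unconstrained. To break this, I would test $\Delta'$ on $J_0+C_1$ and $I_0+C_2$: since $[v,J_0]$ has vanishing $J_0$-component and $[v,I_0]$ has vanishing $I_0$-component for every $v\in\widehat{\mathcal B}$, the identities $[v,J_0]+\tfrac{e}{2}J_0+eC_1=\lambda C_1$ and $[v,I_0]+eI_0+eC_2=\mu C_2$ immediately force $e=0$, hence $\lambda=\mu=0$. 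Combined with the previous paragraph this gives $\Delta'=0$, so $\Delta=D$ is a derivation, completing the proof.
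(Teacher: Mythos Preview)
Your proof is correct and follows the paper's approach (the paper merely says the proof is ``essentially same as that of Corollary~\ref{coro-w22}''): pass to the centerless quotient $\mathcal B$, apply Theorem~\ref{main2}, and then kill the residual central-valued part by a coefficient chase. Your treatment is in fact more careful than the template in Corollary~\ref{coro-w22}: there the claim ``$\Delta(C_1)=0$'' is asserted ``by definition'', but since $\delta(C_1)=C_1$ one only gets $\Delta(C_1)\in\mathbb CC_1$ a priori; your test on $J_0+C_1$ and $I_0+C_2$ supplies precisely the argument needed to close that gap.
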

\begin{proof}
It is essentially same as that of Corollary \ref{coro-w22}.
\end{proof}

\begin{remark}
Based on the above researches we can get such results for the general truncated Virasoro algebra ${\rm Vir}\otimes\mathbb C[t, t^{-1}]/(t^n)$ for any $n\ge 1$ by induction. Moreover,
we can determine local derivations on some related algebras, such as the super Virasoro algebra and the super $W(2,2)$ algebra (see \cite{WGL}).
\end{remark}

\section*{Acknowledgments}
This work is partially supported by the NNSF (Nos. 12071405, 11971315, 11871249)


\begin{thebibliography}{22}

\bibitem{AK} Ayupov Sh. A,  Kudaybergenov K. K.  Local derivations on finite-dimensional Lie algebras, Linear Algebra Appl. 2016;493: 381-398.

\bibitem{Bre} Bre${\rm\check{s}}$ar M. Characterizations of derivations on some normed algebras with involution, J. Algebra. 1992; 152:454-462.

\bibitem{BS} Bre${\rm\check{s}}$ar M,  ${\rm\check{S}}$emrl P. Mappings which preserve idempotents, local automorphisms, and local derivations, Canad. J. Math. 1993, 45: 483-497.

\bibitem{CCRS}
Caroca R, Concha P, Rodrguez E, Salgado-Rebolledo P. Generalizing the $\mathfrak{bms}_{3}$ and 2D-conformal algebras by expanding the Virasoro algebra, European Physical Journal C. 2018; 78(3): 262-276.


\bibitem{ChenLi} Chen H, Li, J. Left-symmetric algebra structures on the $W$-algebra $W(2,2)$, Linear Algebra Appl. 2012; 437: 1821-1834.


\bibitem{CZZ} Chen Y, Zhao K, Zhao Y. Local derivations on the Witt algebra, Linear and Multilinear Algebra, 2022; 70(6): 1159-1172.


\bibitem{Cr} Crist R. L.  Local derivations on operator algebra, Journal Functional Analysis, 1996; 135: 76-92.


\bibitem{DGL} Dilxat D, Gao S, Liu D.  2-Local superderivations on the super Virasoro algebra
 and the super $W(2,2)$ algebra, Commun. Alg. 2021; 49 (12): 5423-5434.

\bibitem{GaoJiangPei} Gao S, Jiang C, Pei Y.  Derivations, central extensions and automorphisms of a Lie algebra,  Acta Math. Sin. 2009; 52: 281-288.


\bibitem{GJP2}Gao S, Jiang C, Pei Y. Low-dimensional cohomology groups of the Lie algebras $W(a,b)$, Commun. Alg. 2011; 39(2): 397-423.



\bibitem{GLZ} Guo X, Lv R, Zhao K. Simple Harish-Chandra modules, intermediate series modules, and Verma modules over the loop-Virasoro algebra,  Forum Math. 2011; 23: 1029-1052.



\bibitem{JiangZhang} Jiang Q, Zhang W. Verma modules over the $W(2,2)$ algebras, J. Geom. Phys. 2015; 98: 118-127.


\bibitem{Kad} Kadison R.V.  Local derivations, J. Algebra. 1990; 130: 494-509.


\bibitem{Lar} Larson D.R. Reflexivity, algebraic reflexivity, and linear interpolation,  Amer. J. Math. 1998; 110: 283-299.

\bibitem{LarSou} Larson  D.R, Sourour  A.R. Local derivations and local automorphisms of $B(X)$, Proc. Sym-pos. Pure Math.,51, Part 2, Providence, Rhode Island, 1990: 187-194.

\bibitem{LZ} Liu D, Zhang J. Local Lie derivations of factor von Neunann algebras,  Linear Algebra Appl.
2017; 519: 208-218.

\bibitem{L} Liu D. Classification of Harish-Chandra modules over some Lie algebras related to the Virasoro algebra. J. Algebra. 2016; 447: 548-559.


\bibitem{P} Power S. Non-self-adjoint operator algebras and inverse systems of simplicial complexes, J. Reine Angew. Math. 1991; 421: 43-61.



\bibitem{Rad} Radobolja G. Subsingular vectors in Verma modules, and tensor product of weight modules over the twisted Heisenberg-Virasoro algebra and $W(2,2)$ algebra, J. Math. Phys. 2013; 54: 071701.

\bibitem{tangw22} Tang X. 2-Local derivations on the $W$-algebra $W(2, 2)$, J. Algebra  Appl. 2021; 20(12): 2150237 (13 pages).

\bibitem{WGL} Wu Q, Gao S, Liu D. Local derivations on the super Viraosor algebra and the $N=1$ super-BMS$_3$ algebra, preprint.


\bibitem{ZhangDong} Zhang W, Dong C. $W$-algebra $W(2,2)$ and the vertex operator algebra $L(\frac{1}{2},0)\otimes L(\frac{1}{2},0)$,  Commun. Math. Phys. 2009; 285: 991-1004.

\bibitem{ZM} Zhu L, Meng D. Some infinite-dimensional complete Lie algebras.  Chinese Ann. Math. Ser. A. 2000; 21(3): 311-316.


\end{thebibliography}
\end{document}